\documentclass[12pt]{amsart}
\usepackage{graphicx}
\usepackage{hyperref}
\usepackage{xcolor}
\usepackage[english,  activeacute]{babel}
\usepackage[utf8]{inputenc}
\usepackage{amssymb}
\usepackage{amsthm}
\usepackage{graphics,graphicx}
\usepackage{enumerate}
\usepackage{array}
\usepackage{bm}
\usepackage{a4wide}
\usepackage{color, url}
\usepackage{float}
\usepackage{lipsum}
\usepackage{multicol}
\usepackage{mwe}
\usepackage[dvipsnames]{xcolor}
\usepackage{array}
\usepackage{csquotes}
\usepackage[backend=biber]{biblatex}
\usepackage{tikz}
\usetikzlibrary{shapes,arrows,intersections}
\usetikzlibrary{matrix,fit,calc,trees,positioning,arrows,chains,shapes.geometric,shapes,angles,quotes}
\usepackage{nicematrix}


\newtheorem{theorem}{Theorem}[section]
\newtheorem{corollary}{Corollary}[section]
\newtheorem{proposition}{Proposition}[section]
\newtheorem{definition}{Definition}[section]
\newtheorem{lemma}{Lemma}[section]

\newtheorem{example}{Example}{}
\newtheorem{remark}{Remark}{}

\newcommand{\Orb}{\operatorname{Orb}}
\newcommand{\tr}{\operatorname{tr}}
\newcommand{\fix}{\operatorname{Fix}}

\title{On the cycle structure of the symmetric tensor power of permutations}
\author{Sebastian Caballero}
\address{\noindent  Departamento de Matem\'aticas, Universidad de los Andes, Bogot\'a, Colombia}
\email{js.caballerob@uniandes.edu.co}
\author{Diego Villamizar}
\address{\noindent  Department of Mathematics,   Xavier University of Louisiana,  New Orleans, LA 70125}
\email{dvillami@xula.edu}
\urladdr{https://sites.google.com/view/dvillami/}
\date{\today}

\begin{document}

\begin{abstract}
Problem 8.1 in \cite{AstaizaetAl2026} asks about the relationship between the cycle decomposition of a permutation $\sigma$ and that of its symmetric tensor power $\sigma ^{\odot k}$. In this paper, we investigate this question and give formulas for computing the number of fixed points and, in the case of a permutation containing at most one cycle of length greater than one, the number of $s$-cycles.
\end{abstract}
\maketitle

\section{Introduction}
\noindent 

In \cite{AstaizaetAl2026} the authors define a way to produce the symmetric tensor power of a finite graph $G = (V,E)$ on $n \in \mathbb{Z}^{\geq 1}$ vertices by using the symmetric tensor power of the graph's adjacency matrix, defined as a $n\times n$ matrix $A_G$ with entries given by $$[A_G]_{i,j} := \begin{cases}
    1 & \text{ if }v_i,v_j \text{ are adjacent,}\\
    0 & \text{otherwise}.
\end{cases}$$
The authors give a formula to compute the entries of the symmetric tensor power of any matrix $A$, denoted by $A^{\odot k}$, given by Equation \ref{eqEntries} (Lemma 2.2 in \cite{AstaizaetAl2026})
\begin{equation}
\label{eqEntries}
[A^{\odot k}]_{\textbf{i}, \textbf{j}}
= \frac{1}{ \sqrt{ \binom{k}{\textbf{m}(\textbf{i})} \binom{k}{\textbf{m}(\textbf{j})}} }
    \sum_{\substack{ \textbf{p} \in \Orb( \textbf{i}) \\ \textbf{q} \in \Orb( \textbf{j}) }} 
        \prod_{\ell=1}^{k} [A]_{p_{\ell}, q_{\ell}} .
\end{equation}
The formula in Equation \ref{eqEntries} is in terms of $\textbf{i} = (i_1,\cdots ,i_k)$ and $\textbf{j} = (j_1, \cdots , j_k)$, size $n$ compositions of $k$ that we represent here as $k$ tuples with entries in $[n] = \{1,2,\cdots ,n\}$ such that $i_1\leq i_2\leq \cdots \leq i_k$ and $j_1\leq j_2\leq \cdots \leq j_k$ respectively. To denote the set of all size $n$ compositions of $k$ we use $\mathcal{C}_{n,k}$, for example, all size $3$ compositions of $2$ are listed in  $\mathcal{C}_{3,2} = \{11,22,33,12,13,23\}$, when parentheses and commas are omitted. The number of such compositions when $n$ and $k$ are fixed is given by $\left |\mathcal{C}_{n,k}\right | = \binom{k+n-1}{n-1}$ by the Stars and Bars counting technique (see Section 1.2 in \cite{Stanley_2023}).\\\\
Given $\textbf{i}$ a size $n$ composition of $k$, we define $\textbf{m}(\textbf{i})$ to be the vector of multiplicities of each element $1\leq s\leq n$ in the vector $\textbf{i}$. That is $\textbf{m}(\textbf{i}) = (m_1(\textbf{i}),m_2(\textbf{i}),\cdots ,m_n(\textbf{i}))$ where $m_{\ell}(\textbf{i})$ is the number of indices $j \in [k]$ such that $i_j = \ell$. As an example, the values of $\textbf{m}$ in all size $3$ compositions of $2$ are $\textbf{m}(11) = (2,0,0),\, \textbf{m}(22) = (0,2,0),\,\textbf{m}(33) = (0,0,2),\,\textbf{m}(12) = (1,1,0),\,\textbf{m}(13) = (1,0,1),\,\textbf{m}(23) = (0,1,1).$\\

The normalization in Equation \ref{eqEntries} uses the term $\binom{k}{\textbf{m}(\textbf{i})} = \frac{k!}{m_{1}(\textbf{i})!\cdot m_{2}(\textbf{i})! \, \cdots \,  m_{n-1}(\textbf{i})!\cdot m_{n}(\textbf{i})!}$, the multinomial coefficient, and the sum is over sets $\Orb (\textbf{i})$ defined as all tuples  $x \in [n]^k$ that can be constructed by permuting the entries of $\textbf{i}$ in all possible ways. As an example, $\Orb (12) = \{12,21\}$. Notice that the size of $\Orb (\textbf{i}) $ is given by the multinomial coefficient $\binom{k}{\textbf{m}(\textbf{i})}$, this can be seen by a standard counting argument of all possible permutations over permutations that fix the $m_{\ell}(\textbf{i})$ appearances of $\ell$ for all $\ell$'s in $[n]$.\\\\
We will denote the set of permutations of $[n]$ as $\mathfrak{S}_n$ and use the line notation to write down permutations, that is, $\sigma \in \mathfrak{S}_n$ will be written as $\sigma = \sigma(1)\, \sigma(2)\, \cdots \, \sigma (n-1)\, \sigma (n)$. We use the classic notation for writing down the cycle decomposition of a permutation. For example, if $\sigma = 4\,6\,3\,5\,1\,2\,9\,8\,7 \in \mathfrak{S}_9$, then the cycle decomposition is given by $(1\,4\,5)(2\, 6)(3)(7\, 9)(8)$. To a permutation $\sigma \in \mathfrak{S}_n$, we associate a vector $C(\sigma) = (C_1(\sigma),C_2(\sigma),\cdots , C_n(\sigma))$, known as \textit{the cycle type}, where $C_i(\sigma)$ is the number of cycles of length $i$ in $\sigma$. As an example, for $\sigma = 4\,6\,3\,5\,1\,2\,9\,8\,7$ as above, $C\left ( \sigma \right ) = (2,2,1,0,0,0,0,0,0)$ as $
\sigma$ has two cycles of length one, two of length two, and one of length three.\\\\
Problem 8.1 in \cite{AstaizaetAl2026} asks about the relationship between the cycle decomposition of a permutation $\sigma$ and that of its symmetric tensor power $\sigma ^{\odot k}$. In this paper, we investigate this question and give formulas for computing the number of fixed points and, in the case of a permutation containing at most one cycle of length greater than one, we produce a formula to compute the number of $s$-cycles (cycles of length $s$).\\\\
This paper is organized as follows. Section 2 describes the symmetric tensor power of permutations and some basic properties. Section 3 gives combinatorial and algebraic approaches to compute the number of fixed points of the symmetric tensor power of a permutation. Section 4 deals with the number of $s$-cycles of the symmetric tensor power of a permutation that contains at most one cycle of length greater than one.
\section{Symmetric Tensor Power of Permutations} 

One can represent a given permutation $\sigma \in \mathfrak{S}_n$ as a $n \times n$ matrix $A_{\sigma}$ defined by 
\begin{align}
\label{eq:PermMatrix}
\left [A_{\sigma}\right ]_{i,j} := \begin{cases}
    1 & \text{if } \sigma _i = j,\\
    0 & \text{otherwise.} 
\end{cases}    
\end{align}
Left multiplication by $\textbf{v} = (v_1,\cdots ,v_n)$ produces as an image the vector $(v_{\sigma (1)},v_{\sigma (2)},\cdots , v_{\sigma (n)})$. This representation of a permutation by a matrix can be used to define the $k$-th symmetric tensor power of a permutation in the following way.
\begin{definition}
    Let $n\in \mathbb{Z}^{\geq 1}$, $\sigma \in \mathfrak{S}_n$, and $k\geq 1$ a positive integer. We denote by $\sigma ^{\odot k}$ the permutation defined by the matrix $A_{\sigma ^{\odot k}} := A_{\sigma}^{\odot k}$.
\end{definition}
That is, to create the symmetric tensor power, first convert the permutation $\sigma$ into its corresponding permutation matrix $A_{\sigma}$, apply the symmetric tensor power $A_{\sigma}^{\odot k}$ using Equation \ref{eqEntries}. The permutation that defines the symmetric tensor power of $A_{\sigma}$ will be the symmetric tensor power $\sigma ^{\odot k}$ of $\sigma$. The following Proposition (Lemma A.1 in \cite{AstaizaetAl2026}), stated here without a proof, guaranties that $A_{\sigma}^{\odot k}$ is a permutation matrix.
\begin{proposition}[Lemma A.1 in \cite{AstaizaetAl2026}]
\label{prop:diag}
    Let $n\in \mathbb{Z}^{\geq 1}$, $\sigma \in \mathfrak{S}_n$, and $A_{\sigma}$ be the $n\times n$ matrix defined as in Equation \ref{eq:PermMatrix}. For any $k\in \mathbb{Z}^{\geq 1}$, the matrix $A_{\sigma ^{\odot k}}=A_{\sigma}^{\odot k}$ is a permutation matrix of size $\binom{n+k-1}{k} \times \binom{n+k-1}{k}$.
\end{proposition}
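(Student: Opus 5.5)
The plan is to compute the entries of $A_{\sigma}^{\odot k}$ directly from Equation \ref{eqEntries}. I will show that for each $\textbf{i}\in\mathcal{C}_{n,k}$ there is exactly one $\textbf{j}\in\mathcal{C}_{n,k}$ with $[A_\sigma^{\odot k}]_{\textbf{i},\textbf{j}}=1$, and that every other entry in that row is $0$. I will then check that the resulting map $\textbf{i}\mapsto\textbf{j}$ is a bijection of $\mathcal{C}_{n,k}$. The size claim is immediate, since rows and columns are indexed by $\mathcal{C}_{n,k}$, and $|\mathcal{C}_{n,k}|=\binom{k+n-1}{n-1}=\binom{n+k-1}{k}$ by the stars and bars count recalled in the introduction.

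First I analyze the products in the sum. Since $[A_\sigma]_{p,q}=1$ exactly when $q=\sigma(p)$, the product $\prod_{\ell}[A_\sigma]_{p_\ell,q_\ell}$ equals $1$ if $\textbf{q}=\sigma(\textbf{p}):=(\sigma(p_1),\dots,\sigma(p_k))$, and $0$ otherwise. Applying $\sigma$ coordinatewise commutes with permuting coordinates, so $\sigma(\Orb(\textbf{i}))=\Orb(\textbf{i}')$, where $\textbf{i}'$ is the sorted rearrangement of $\sigma(\textbf{i})$. Orbits are either equal or disjoint, so the entry is nonzero only when $\textbf{j}=\textbf{i}'$. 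In that case the sum counts the pairs $(\textbf{p},\sigma(\textbf{p}))$ with $\textbf{p}\in\Orb(\textbf{i})$, and there are exactly $|\Orb(\textbf{i})|=\binom{k}{\textbf{m}(\textbf{i})}$ of them.

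Next I handle the normalization. The multiplicity vector of $\textbf{i}'$ is a permutation of that of $\textbf{i}$, since $m_{\sigma(s)}(\textbf{i}')=m_s(\textbf{i})$. Hence $\binom{k}{\textbf{m}(\textbf{i}')}=\binom{k}{\textbf{m}(\textbf{i})}$. The prefactor is then $1/\binom{k}{\textbf{m}(\textbf{i})}$, and the entry equals exactly $1$. Finally, the map $\textbf{i}\mapsto\textbf{i}'$ is a bijection of $\mathcal{C}_{n,k}$, because the same construction with $\sigma^{-1}$ gives its inverse; equivalently, it is the induced action of $\sigma$ on multisets of size $k$. So each column also contains exactly one $1$, and $A_\sigma^{\odot k}$ is a permutation matrix.

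The only delicate step is the orbit identity $\sigma(\Orb(\textbf{i}))=\Orb(\textbf{i}')$ together with the disjointness of distinct orbits. Both follow from viewing $\Orb(\textbf{i})$ as the set of words whose underlying multiset is that of $\textbf{i}$, and noting that $\sigma$ acts bijectively on words while preserving the relation "same multiset". The remaining steps are routine bookkeeping.
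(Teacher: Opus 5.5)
Your proof is correct and complete. The paper itself gives no proof of this proposition. It is imported as Lemma A.1 of \cite{AstaizaetAl2026} and explicitly ``stated here without a proof,'' so there is no in-paper argument to compare against.

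Your direct computation from Equation~\ref{eqEntries} is the natural route, and each step checks out:
\begin{enumerate}
\item The product $\prod_{\ell}[A_\sigma]_{p_\ell,q_\ell}$ is the indicator of $\textbf{q}=\sigma(\textbf{p})$.
\item The identity $\sigma(\Orb(\textbf{i}))=\Orb(\textbf{i}')$, together with disjointness of distinct orbits, forces every entry of row $\textbf{i}$ outside column $\textbf{i}'$ to vanish.
\item In column $\textbf{i}'$ the sum equals $|\Orb(\textbf{i})|=\binom{k}{\textbf{m}(\textbf{i})}$. This cancels the normalization exactly, because $\textbf{m}(\textbf{i}')$ is a rearrangement of $\textbf{m}(\textbf{i})$.
\item The same construction with $\sigma^{-1}$ inverts $\textbf{i}\mapsto\textbf{i}'$, so each column also contains exactly one $1$.
\item Since rows and columns share the index set $\mathcal{C}_{n,k}$, the chosen ordering of compositions is irrelevant to being a permutation matrix.
\end{enumerate}

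Your computation also shows that row $\textbf{i}$ of $A_\sigma^{\odot k}$ has its unique $1$ in the column of the sorted form of $\sigma(\textbf{i})$. So the permutation $\sigma^{\odot k}$ is exactly the induced action of $\sigma$ on size-$k$ multisets. The paper adopts this ``diagonal action'' description just before Example~\ref{ex:actDiag} and checks it only on that one instance. It then relies on it in the proofs of Theorems~\ref{theo:order}, \ref{theo:FixedInvoGeneral}, \ref{theo:generalFix} and \ref{theo:recur}. Your argument therefore justifies that identification in general, not just the permutation-matrix claim.
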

\begin{example}
    \label{ex:matrix1}
    Consider $n=3$ and $k=2$. The size $3$ compositions of $2$ are given by $\mathcal{C}_{3,2} = \{11,22,33,12,13,23\}$. In order to be able to produce a matrix, we have to order the compositions in a certain way. Following \cite{AstaizaetAl2026}, we first order the compositions by the number of different elements of $[n]$ that appear in the sequence. For example, $11<12$ because $11$ contains only one element of $[3]$ while $12$ contains two. In the case where two sequences tie, that is, they have the same number of distinct elements, we use the lexicographic order on the sequences assuming the natural order in $[n]$. For example, $12<13$ because even though they have the same number of different elements, namely two, $12$ is smaller lexicographically than $13$.  Let  $3\,1\,2 \in \mathfrak{S}_3$. Using Equation \ref{eq:PermMatrix}, its matrix form is $A_{3\,1\,2 } = \left [ \begin{matrix}
        0&0&1\\
        1&0&0\\
        0&1&0
    \end{matrix}\right ]$. By Proposition \ref{prop:diag}, this is a permutation matrix, and using Equation \ref{eqEntries}, we obtain 
\begin{equation*}
    A_{3\,1\,2}^{\odot 2}= \left [\begin{matrix}
        0&0&1&0&0&0\\
1&0&0&0&0&0\\
0&1&0&0&0&0\\
0&0&0&0&1&0\\
0&0&0&0&0&1\\
0&0&0&1&0&0
    \end{matrix} \right ].
\end{equation*}
 This corresponds to  ${3\,1\,2} ^{\odot 2} = 3\,1\,2\,5\,6\,4$.
\end{example}
Performing the same computation as in Example \ref{ex:matrix1} for each permutation in $\mathfrak{S}_3$, we get all six permutations and their cycle decompositions in Table \ref{tab:symtenpow3}.
\begin{table}[!ht]
    \centering
    \begin{tabular}{c|c|c|c}
    \hline 
    $\sigma \in \mathfrak{S}_3$&Cycle decomp. of $\sigma$&$\sigma ^{\odot 2}$& Cycle decomp. of $\sigma ^{\odot 2}$\\
    \hline 
        $ 1\,2\,3 $&$ (1)(2)(3) $&$ 1\,2\,3\,4\,5\,6 $&$ (1)(2)(3)(4)(5)(6) $\\
$ 1\,3\,2 $&$ (1)(2\, 3) $&$ 1\,3\,2\,5\,4\,6 $&$ (1)(2\,3)(4\,5)(6) $\\
$ 2\,1\,3 $&$ (1\,2)(3) $&$ 2\,1\,3\,4\,6\,5 $&$ (1\,2)(3)(4)(5\,6) $\\
$ 2\,3\,1 $&$ (1\,2\,3) $&$ 2\,3\,1\,6\,4\,5 $&$ (1\,2\,3)(4\,6\,5) $\\
$ 3\,1\,2 $&$ (1\,3\,2) $&$ 3\,1\,2\,5\,6\,4 $&$ (1\,3\,2)(4\,5\,6) $\\
$ 3\,2\,1 $&$ (1\,3)(2) $&$ 3\,2\,1\,6\,5\,4 $&$ (1\,3)(2)(4\,6)(5) $\\
    \hline
    \end{tabular}
    \caption{All Symmetric Tensor $2$-Powers of permutations of  $[3]$.}
    \label{tab:symtenpow3}
\end{table}
\\
It is useful to consider a non-algebraic way to construct $\sigma ^{\odot k}$. A way to do this is by noticing that $\sigma ^{\odot k}$ can be thought of as a permutation of the elements in $\mathcal{C}_{n,k}$ by letting $\sigma$ act diagonally on the composition sequence. This means that if $\textbf{x} = (x_1,\cdots ,x_n) \in \mathcal{C}_{n,k}$, we denote by $\sigma ^{\odot k} (\textbf{x})$ the composition that contains $\sigma (x_1)\sigma(x_2)\cdots \sigma(x_n)$ in its orbit.  The following example uses this method to compute $\sigma ^{\odot 2}$ for $\sigma = 3\, 1\, 2$.

\begin{example}
\label{ex:actDiag}
    Consider $\textbf{x} = 12$, then $\sigma (\textbf{x}) = \sigma(1)\sigma(2) = 31\in \Orb(13)$, where $12, \text{ and }13$ are in $\mathcal{C}_{3,2}$. Table \ref{tab:actionsigma} shows $\sigma ^{\odot 2}(\textbf{x})$ for each $\textbf{x}\in \mathcal{C}_{3,2}$.
    \begin{table}[!ht]
        \centering
        \begin{tabular}{|c|c|}
        \hline
                $\textbf{x}\in \mathcal{C}_{3,2}$&$\sigma ^{\odot 2}(\textbf{x})$\\
                \hline 
             $11$& $\sigma ^{\odot 2}(11)=33$  \\
             $22$& $\sigma ^{\odot 2}(22)=11$\\
             $33$& $\sigma ^{\odot 2}(33)=22$\\
             $12$& $\sigma ^{\odot 2}(12)=31=13$\\
             $13$& $\sigma ^{\odot 2}(13)=32=23$\\
             $23$& $\sigma ^{\odot 2}(23)=12$\\
             \hline
        \end{tabular}
        \caption{The result of applying $\sigma ^{\odot 2}$ to every $\textbf{x}\in \mathcal{C}_{3,2}$}
        \label{tab:actionsigma}
    \end{table}\\
This computation results in $\sigma ^{\odot 2} = 3\,1\,2\,5\,6\,4$ with respect to the chosen order on the compositions. This agrees with Example \ref{ex:matrix1}.
\end{example}
Given a permutation $\sigma \in \mathfrak{S}_n$, one can compute its order, indicated by $\mathrm{ord}(\sigma)$, and defined as the smallest number greater than zero such that $\sigma ^{\operatorname{ord}(\sigma)}:=\underbrace{\sigma \circ \sigma \circ \cdots \circ \sigma}_{\operatorname{ord}(\sigma) \text{ times} }$ produces the identity permutation $\mathtt{Id} = 1\, 2\, \cdots \, n$. The value $\mathrm{ord}(\sigma)$ can be computed using the least common multiple of the lengths of the cycles of $\sigma$. The following Theorem shows that the order is fixed by taking symmetric tensor powers.
\begin{theorem}
\label{theo:order}
    Let $n$ and $k$ be positive integers. Given $\sigma \in \mathfrak{S}_n$, we have $\mathrm{ord}(\sigma) = \mathrm{ord}(\sigma^{\odot k})$.
\end{theorem}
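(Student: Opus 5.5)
We will work with the combinatorial description of $\sigma^{\odot k}$ from Example \ref{ex:actDiag}, rather than with the matrix formula (\ref{eqEntries}). Regard a composition $\textbf{x}\in\mathcal{C}_{n,k}$ as a multiset of size $k$ drawn from $[n]$, or equivalently as its multiplicity vector $\textbf{m}(\textbf{x})$. Then $\sigma^{\odot k}$ sends the multiset $\{x_1,\dots,x_k\}$ to $\{\sigma(x_1),\dots,\sigma(x_k)\}$. In terms of multiplicities this reads $m_{\sigma(\ell)}(\sigma^{\odot k}(\textbf{x})) = m_\ell(\textbf{x})$.

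The first step is compatibility with composition:
\[
(\sigma\circ\tau)^{\odot k}=\sigma^{\odot k}\circ\tau^{\odot k}.
\]
This holds because applying $\tau$ entrywise and then $\sigma$ entrywise is the same as applying $\sigma\circ\tau$ entrywise, and sorting commutes with this since we only track orbits under rearrangement. The identity also follows from functoriality of the symmetric power applied to $A_\sigma A_\tau$, but the combinatorial argument is self-contained. Two consequences follow: $(\sigma^{j})^{\odot k}=(\sigma^{\odot k})^{j}$ for all $j\ge 1$, and $\mathtt{Id}^{\odot k}=\mathtt{Id}$. Hence $(\sigma^{\odot k})^{\mathrm{ord}(\sigma)}=\mathtt{Id}$, so $\mathrm{ord}(\sigma^{\odot k})$ divides $\mathrm{ord}(\sigma)$.

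For the reverse inequality, set $d=\mathrm{ord}(\sigma^{\odot k})$. Then $\sigma^d$ fixes every multiset of size $k$. Take the multisets $\{\ell,\ell,\dots,\ell\}$, that is, the composition $\ell\ell\cdots\ell$, for each $\ell\in[n]$. Such a multiset is sent by $\sigma^d$ to $\sigma^d(\ell)\cdots\sigma^d(\ell)$. Since it is fixed, $\sigma^d(\ell)=\ell$ for every $\ell$, so $\sigma^d=\mathtt{Id}$ and $\mathrm{ord}(\sigma)$ divides $d$. Combining the two divisibilities gives equality. Equivalently, the constant compositions form an invariant subset on which $\sigma^{\odot k}$ acts exactly as $\sigma$; these are the first $n$ positions in the chosen order, as Table \ref{tab:symtenpow3} illustrates. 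So $\sigma$ is a restriction of $\sigma^{\odot k}$, which already gives $\mathrm{ord}(\sigma)\mid\mathrm{ord}(\sigma^{\odot k})$.

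The only point needing care is the first step: checking that the diagonal action is well defined on $\mathcal{C}_{n,k}$ and agrees with the matrix definition. We take that agreement as given from the discussion in Example \ref{ex:actDiag}, together with Proposition \ref{prop:diag}. Once that is granted, the homomorphism property is routine and the rest is immediate.
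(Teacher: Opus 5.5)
Your proof is correct and follows the paper's argument. The diagonal action gives $(\sigma^{\odot k})^{\mathrm{ord}(\sigma)}=\mathtt{Id}$, and the constant compositions $\ell\ell\cdots\ell$ force $\sigma^{d}=\mathtt{Id}$ for $d=\mathrm{ord}(\sigma^{\odot k})$; the only differences are that you make explicit the compatibility $(\sigma^{j})^{\odot k}=(\sigma^{\odot k})^{j}$, which the paper uses silently, and that you argue by divisibility rather than by contradiction.
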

\begin{proof}
    Let $p = \mathrm{ord}(\sigma)$. As shown in Example \ref{ex:actDiag}, we can think of $\sigma^{\odot k}$ as a permutation of elements in $\mathcal{C}_{n,k}$ by letting $\sigma$ act diagonally on the composition sequence; for a composition $i_1i_2\dots i_k$, we have
    \begin{align*}
        (\sigma^{\odot k})^p(i_1 i_2 \cdots i_k) &= \sigma^p(i_1) \sigma^p(i_2) \cdots \sigma^p(i_k)\\
        &= i_1i_2 \cdots i_k.
    \end{align*}
    This implies $\mathrm{ord(\sigma^{\odot k}}) \le p$. Now, suppose that $q:=\mathrm{ord}(\sigma^{\odot k}) < p$, then for each $i \in [n]$, we have $(\sigma^{\odot k})^{q}(i i \cdots i) = \sigma^q(i)\cdots \sigma^q(i) = ii \cdots i$, and hence $\sigma^q(i) = i$ for all $i \in [n]$, implying that $p \le q < p$, a contradiction. Therefore, $\mathrm{ord}(\sigma) = \mathrm{ord}(\sigma^{\odot k})$.
\end{proof}
In the following, we state some basic facts about the complete homogeneous symmetric polynomials (see Section 7.4 in \cite{Stanley_2023}), the trace of the symmetric tensor power, and the eigenvalues of permutation matrices that will be used in Section 3 to compute the number of fixed points of the symmetric tensor power of a permutation. The connection of the fixed points of a permutation $\sigma \in \mathfrak{S}_n$ with the trace of some matrix is that if  $A_{\sigma}$ is defined as in Equation \ref{eq:PermMatrix}, then $\tr (A_{\sigma}) = [A_\sigma]_{1,1}+[A_\sigma]_{2,2}+\cdots [A_\sigma]_{n,n}$ which counts the number of elements $i\in [n]$ such that $\sigma(i)=i$.
\begin{definition}
\label{def:HomPolynomial}
Let $n,k$ be positive integers. The \textit{complete homogeneous symmetric polynomial} in variables $x_1,x_2,\cdots ,x_n$ is defined as
\begin{align}
\label{eq:defHomSymPol}
    h_k(x_1,x_2,\cdots, x_n) := \sum _{a_1+a_2+\cdots +a_n = k}\prod _{i = 1}^nx_i^{a_i},
\end{align}
    where the sum is over all size $n$ compositions of $k$.
\end{definition}
These polynomials are symmetric, permuting the variables leaves the polynomial unchanged, i.e., $h_k(x_1,\cdots, x_n) = h_k(x_{\sigma (1)},\cdots , x_{\sigma(n)})$, for $\sigma \in \mathfrak{S}_n$. 
One way to compute these polynomials $h_k$ is to use the following recursion. 
\begin{lemma}
    \label{lemma:homoSymProp}
    Let $k,n$ be positive integers and $\ell \in [n]$, then
    $$h_k(x_1,\cdots,x_{\ell},\cdots  , x_n) = h_k(x_1,\cdots ,\hat{x}_{\ell},\cdots .\,x_n)+x_{\ell}h_{k-1}(x_1,\cdots , x_{\ell},\cdots ,x_n),$$
    where $\hat{x}_{\ell}$ denotes that the variable is missing.
\end{lemma}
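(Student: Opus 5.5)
The natural approach is to split the sum in Definition \ref{def:HomPolynomial} according to the exponent of the distinguished variable $x_{\ell}$. Every monomial $\prod_{i=1}^n x_i^{a_i}$ with $a_1+\cdots+a_n=k$ has either $a_{\ell}=0$ or $a_{\ell}\geq 1$, so I write $h_k(x_1,\cdots,x_n)=S_0+S_{\geq 1}$, where $S_0$ collects the compositions with $a_{\ell}=0$ and $S_{\geq 1}$ those with $a_{\ell}\geq 1$.

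For $S_0$, I note that compositions of $k$ of size $n$ with $a_{\ell}=0$ are in bijection with size $n-1$ compositions of $k$, obtained by deleting the $\ell$-th coordinate. The corresponding monomial does not involve $x_{\ell}$, so $S_0=h_k(x_1,\cdots,\hat{x}_{\ell},\cdots,x_n)$.

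For $S_{\geq 1}$, I use the shift $a_{\ell}\mapsto a_{\ell}-1$ and leave the other coordinates unchanged. This is a bijection from size $n$ compositions of $k$ with $a_{\ell}\geq 1$ onto all size $n$ compositions of $k-1$: it is injective, and its inverse adds $1$ back in position $\ell$. Under this bijection each monomial factors as $x_{\ell}\cdot x_{\ell}^{a_{\ell}-1}\prod_{i\neq \ell}x_i^{a_i}$. Pulling out the common factor $x_{\ell}$ gives $S_{\geq 1}=x_{\ell}\,h_{k-1}(x_1,\cdots,x_n)$, and adding the two pieces yields the claimed recursion.

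No step here is a real obstacle; the only care needed is at the edge cases. When $k=1$ we need $h_0=1$, the sum over the single zero composition. When $n=1$ the first term is $h_k$ in no variables, which should be read as $0$ for $k\geq 1$. I would state these conventions explicitly before running the bijections.
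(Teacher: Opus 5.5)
Your proposal is correct and follows the same route as the paper: you split the defining sum according to whether $a_{\ell}=0$ or $a_{\ell}\geq 1$, identify the first part with $h_k$ in the remaining $n-1$ variables, and factor $x_{\ell}$ out of the second part to get $x_{\ell}h_{k-1}(x_1,\cdots,x_n)$. Your explicit bijections and the conventions $h_0=1$ and $h_k=0$ in zero variables for $k\geq 1$ make precise what the paper's proof leaves implicit.
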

\begin{proof}
    From Equation \ref{eq:defHomSymPol}, we have the following
    \begin{align*}
        h_k(x_1,x_2,\cdots, x_n) &= \sum _{a_1+a_2+\cdots +a_{\ell}+\cdots +a_n = k}x_{\ell}^{a_{\ell}}\prod _{i \neq \ell}x_i^{a_i}.
    \end{align*}
    There are two cases to consider, either $a_{\ell}=0$ or $a_{\ell}>0$. If the former is true, then the sum is over size $n-1$ compositions of $k$ and $x_{\ell}$ does not appear, which yields $h_k(x_1,\cdots ,\hat{x}_{\ell},\cdots ,x_n)$. For the case $a_{\ell}>0$, we can factor out a $x_{\ell}$ and the degree $k$ of each monomial decreases by one, which yields $x_{\ell}h_{k-1}(x_1,\cdots , x_{\ell},\cdots ,x_n)$. Adding the two cases gives the result.
    
\end{proof}
The connection between the complete homogeneous symmetric polynomial and the symmetric tensor power is that the trace of the $k$-th symmetric tensor power corresponds to the complete homogeneous symmetric polynomial evaluated at the eigenvalues of the matrix $A$. This is stated without a proof below.
\begin{proposition}[Corollary 4.1 in \cite{AstaizaetAl2026}]
\label{prop:trace}
    Let $A$ be an $n \times n$ matrix with eigenvalues $\lambda _1,\lambda _2,\cdots , \lambda _n$ and let $k\geq 1$, then $\tr \left (A^{\odot k}\right ) = h_k(\lambda _1,\lambda _2,\cdots ,\lambda _n)$.
\end{proposition}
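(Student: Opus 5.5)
The plan is to identify $A^{\odot k}$ with the matrix of the linear map induced by $A$ on the $k$-th symmetric power $\mathrm{Sym}^k(\mathbb{C}^n)$, and then to compute the trace in a basis in which $A$ is triangular. Concretely, the normalization in Equation \ref{eqEntries} means the following. Let $e_{\textbf{i}}$ be the normalized symmetrized tensor
$$e_{\textbf{i}} = \binom{k}{\textbf{m}(\textbf{i})}^{-1/2}\sum_{\textbf{p}\in\Orb(\textbf{i})} e_{p_1}\otimes\cdots\otimes e_{p_k}.$$
Then $[A^{\odot k}]_{\textbf{i},\textbf{j}}$ is the $(\textbf{i},\textbf{j})$ entry of the restriction of $A^{\otimes k}$ to the symmetric subspace, written in the orthonormal basis $\{e_{\textbf{i}}\}_{\textbf{i}\in\mathcal{C}_{n,k}}$.

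First I will check that $\textbf{i}\mapsto e_{\textbf{i}}$ gives an orthonormal basis of the symmetric subspace. I will then check that $\langle e_{\textbf{i}}, A^{\otimes k} e_{\textbf{j}}\rangle$ expands exactly to the right-hand side of Equation \ref{eqEntries}. This is a direct expansion, because $\langle e_{p_1}\otimes\cdots\otimes e_{p_k}, A^{\otimes k}(e_{q_1}\otimes\cdots\otimes e_{q_k})\rangle=\prod_\ell [A]_{p_\ell,q_\ell}$. It follows that $A\mapsto A^{\odot k}$ is the matrix of $\mathrm{Sym}^k(A)$. In particular it is multiplicative, $(AB)^{\odot k}=A^{\odot k}B^{\odot k}$, since $A^{\otimes k}$ is multiplicative and preserves the symmetric subspace. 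Consequently $(PAP^{-1})^{\odot k}=P^{\odot k}A^{\odot k}(P^{\odot k})^{-1}$, so the trace of $A^{\odot k}$ is invariant under similarity of $A$. The same holds for any basis-independent description of $\mathrm{Sym}^k(A)$, even though the orthonormal formula itself is tied to the standard basis.

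Next I will triangularize. By Schur decomposition, $A=UTU^{*}$ with $U$ unitary and $T$ upper triangular with diagonal $\lambda_1,\dots,\lambda_n$, so it suffices to compute $\tr(T^{\odot k})$. For an upper triangular $T$, the vector $T e_{q}$ is a combination of $e_1,\dots,e_q$. Hence $T^{\otimes k}$ maps $e_{q_1}\otimes\cdots\otimes e_{q_k}$ into the span of tensors $e_{p_1}\otimes\cdots\otimes e_{p_k}$ with $p_\ell\le q_\ell$ for every $\ell$. The diagonal entry $[T^{\odot k}]_{\textbf{j},\textbf{j}}$ then comes from the terms in Equation \ref{eqEntries} with $\textbf{p},\textbf{q}\in\Orb(\textbf{j})$ and $p_\ell\le q_\ell$ for all $\ell$. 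Two rearrangements of the same multiset satisfying $p_\ell\le q_\ell$ for all $\ell$ must coincide, since the sums of the entries agree. So only the terms with $\textbf{p}=\textbf{q}$ survive, each contributing $\prod_\ell\lambda_{j_\ell}$. There are $\binom{k}{\textbf{m}(\textbf{j})}$ such terms, which cancels the normalization, giving
$$[T^{\odot k}]_{\textbf{j},\textbf{j}}=\prod_{s=1}^n\lambda_s^{m_s(\textbf{j})}.$$
Summing over $\textbf{j}\in\mathcal{C}_{n,k}$ and reindexing by $\textbf{m}(\textbf{j})$, which ranges over all $(a_1,\dots,a_n)$ with $\sum a_i=k$, yields $h_k(\lambda_1,\dots,\lambda_n)$ by Definition \ref{def:HomPolynomial}.

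The main obstacle is the first step, because Equation \ref{eqEntries} uses $[A]_{p,q}$ without conjugation. The identification with $\mathrm{Sym}^k(A)$ therefore has to be done with the bilinear pairing, or equivalently by noting that the symmetrized vectors have real coefficients. Multiplicativity then follows from the facts that the $e_{\textbf{i}}$ span the $S_k$-invariant subspace and that $A^{\otimes k}$ commutes with the symmetrizer. With that in place, the triangular computation and the counting step that cancels $\binom{k}{\textbf{m}(\textbf{j})}$ are routine.
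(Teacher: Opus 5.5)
The paper does not prove this proposition. It is imported from Corollary 4.1 of \cite{AstaizaetAl2026} and used as a black box, so there is no argument in the paper to compare yours against. Your proof is correct and self-contained.

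\begin{itemize}
\item Reading Equation \ref{eqEntries} as the matrix of $A^{\otimes k}$ restricted to the symmetric subspace, in the orthonormal basis of normalized orbit sums $e_{\textbf{i}}$, is exactly right.
\item Your concern about conjugation is harmless. The pairing $\langle e_{\textbf{p}}, A^{\otimes k} e_{\textbf{q}}\rangle=\prod_\ell [A]_{p_\ell,q_\ell}$ never conjugates entries of $A$, and the $e_{\textbf{i}}$ are real.
\item Multiplicativity gives similarity invariance of $\tr(A^{\odot k})$, using $I^{\odot k}=I$ to get $(P^{-1})^{\odot k}=(P^{\odot k})^{-1}$.
\item The triangular computation is sound. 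A term with $\textbf{p},\textbf{q}\in\Orb(\textbf{j})$ survives only if $p_\ell\le q_\ell$ for all $\ell$. Equal entry sums then force $\textbf{p}=\textbf{q}$, and the $\binom{k}{\textbf{m}(\textbf{j})}$ surviving terms cancel the normalization.
\end{itemize}

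A good feature of your route is that you only need the diagonal of $T^{\odot k}$, not its triangularity or the full spectrum of $A^{\odot k}$. It also covers non-diagonalizable $A$ without a density argument.

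The paper only applies the proposition to permutation matrices, which are unitary and hence diagonalizable. In that setting you could replace Schur by diagonalization $A=PDP^{-1}$. Equation \ref{eqEntries} then shows directly that $D^{\odot k}$ is diagonal with entries $\prod_s\lambda_s^{m_s(\textbf{j})}$, because for $\textbf{i}\neq\textbf{j}$ the orbits are disjoint and no term can have $\textbf{p}=\textbf{q}$.
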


To use Proposition \ref{prop:trace} when $A$ is a permutation matrix, we must understand the eigenvalues of a permutation matrix. The following proposition tells us that the eigenvalues and their multiplicities are totally determined by the cycle type of the permutation.
    \begin{lemma}
    \label{lemma:eigenvalues}
        Let $\sigma \in \mathfrak{S}_n$ and $\ell \in [n]$. A cycle of length $\ell$ in the decomposition of $\sigma$ contributes once to the multiplicity of the eigenvalue $e^{\frac{2\pi i k}{\ell}}$ for $0 \le k < \ell-1$.
    \end{lemma}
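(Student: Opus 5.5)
The plan is to reduce to a single cycle by a change of basis and then compute the spectrum of one cyclic block directly. (I read the range in the statement as $0\le k\le \ell-1$, that is, all $\ell$ of the $\ell$-th roots of unity; this is what the argument gives and what is used later.)

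First, write $\sigma$ as a product of disjoint cycles $c_1,\dots,c_r$ of lengths $\ell_1,\dots,\ell_r$ with $\ell_1+\cdots+\ell_r=n$. Relabel $[n]$ so that the support of each cycle is a block of consecutive integers, and each cycle on its block $\{a+1,\dots,a+\ell\}$ is $a+1\mapsto a+2\mapsto\cdots\mapsto a+\ell\mapsto a+1$. The relabeling is a permutation $\tau$ with $A_{\tau\sigma\tau^{-1}}=P A_\sigma P^{-1}$ for a permutation matrix $P$. Similar matrices share eigenvalues with multiplicities, so we may assume $A_\sigma$ is block diagonal. Each diagonal block is the $\ell_j\times\ell_j$ matrix $Z_{\ell_j}$ of the cyclic shift given by Equation \ref{eq:PermMatrix}. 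The characteristic polynomial of $A_\sigma$ is then the product of those of the blocks, so the multiset of eigenvalues is the union of the blocks' multisets. This is exactly the claim that each cycle contributes independently.

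Second, for a single $\ell$-cycle, compute $\det(xI-Z_\ell)$. One way is cofactor expansion along the first row, or the companion-matrix form, which gives $x^\ell-1$. A cleaner way is to exhibit eigenvectors directly. For $\omega=e^{2\pi i k/\ell}$, the vector $\mathbf v_k=(1,\omega,\omega^2,\dots,\omega^{\ell-1})^T$ satisfies $Z_\ell\mathbf v_k=\omega\,\mathbf v_k$, because $[Z_\ell\mathbf v_k]_i=(\mathbf v_k)_{i+1 \bmod \ell}=\omega^{i}=\omega\cdot\omega^{i-1}$. The indexing convention in Equation \ref{eq:PermMatrix} may yield $\omega^{-1}$ instead, but $k\mapsto -k$ permutes the roots of unity, so this does not matter. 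The vectors $\mathbf v_0,\dots,\mathbf v_{\ell-1}$ are the columns of a Vandermonde matrix with distinct nodes, so they are linearly independent. Hence $Z_\ell$ is diagonalizable and has each $\ell$-th root of unity as an eigenvalue with multiplicity exactly one.

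Combining the two steps, the multiplicity of a given eigenvalue $\lambda$ of $A_\sigma$ equals the number of cycles whose length $\ell$ satisfies $\lambda^\ell=1$. Each such cycle contributes exactly one. The only delicate point is bookkeeping rather than mathematics: the row/column convention of Equation \ref{eq:PermMatrix} and the conjugation step must be stated carefully. I expect no real obstacle beyond checking that the similarity by a permutation matrix preserves the block structure claimed.
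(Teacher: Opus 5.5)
Your proof is correct. Its core is the same as the paper's: your block eigenvector $(1,\omega,\dots,\omega^{\ell-1})^T$ is, in the original coordinates, exactly the paper's $\mathbf v=\sum_{i=1}^{\ell}\lambda^{i-1}e_{a_i}$.

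The difference is in how you show that multiplicities add over cycles. You conjugate $A_\sigma$ into block-diagonal form and use that the characteristic polynomial factors over the blocks. This gives algebraic multiplicities directly. You then only need each block to have the $\ell$-th roots of unity as simple eigenvalues, which already follows from $\det(xI-Z_\ell)=x^\ell-1$ having distinct roots.

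The paper never relabels. It notes that eigenvectors supported on different cycles are independent and counts $\sum_{\ell}\ell\, C_\ell(\sigma)=n$ of them. From this it concludes that they form an eigenbasis of $\mathbb{C}^n$, which gives diagonalizability of $A_\sigma$ and the multiplicities at once. That count tacitly uses the within-cycle independence that your Vandermonde step makes explicit. The paper's route avoids the relabeling; yours isolates the additivity as a general fact about block-diagonal matrices.

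Your reading of the range as $0\le k\le \ell-1$ is the right one; the paper's own proof says each $\ell$-cycle contributes $\ell$ eigenvalues. With the convention of Equation~\ref{eq:PermMatrix} one has $(A_\sigma\mathbf v)_i=v_{\sigma(i)}$, so your eigenvalue is exactly $\omega$ and the $\omega^{-1}$ hedge is unnecessary.
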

    
    \begin{proof}
        Let $\sigma = (a_1\,a_2\, \dots \, a_\ell)$ be any cycle of length $\ell$, and let $k$ be a non-negative integer $0 \le k < \ell-1$. We explicitly construct an eigenvector for $\lambda = e^{\frac{2 \pi i k}{\ell}}$. Let $\mathbf{v} = (v_1, v_2, \dots, v_n)$ be defined as $\mathbf{v} = \sum _{i = 1}^{\ell}\lambda ^ {i-1}e_{a_{i}}$, where $e_i = (0,\cdots ,1,\cdots ,0)$ denotes the $i$-th element of the standard basis. $\textbf{v}$ contains powers of $\lambda$ exactly at the positions of the elements that belong to the cycle.
        This construction implies that $\lambda$ is an eigenvalue for $A_\sigma$ with eigenvector $\mathbf{v}$ noticing that for $j$ not in the cycle, $(A_\sigma \mathbf{v})_j = 0 = (\lambda \mathbf{v})_j$; for $i \in [\ell-1]$, we have $(A\mathbf{v})_{a_{i}} = v_{a_{i+1}} = (\lambda \mathbf{v})_{a_i}$; and for the case $i = \ell$, we have $(A\mathbf{v})_{a_\ell} = \mathbf{v}_1 = 1 = (\lambda \mathbf{v})_{a_{\ell}}$ 
        therefore, $A\mathbf{v} = \lambda \mathbf{v}$.
        Each cycle of length $\ell$ contributes exactly $\ell$ eigenvalues. All eigenvectors corresponding to different cycles are linearly independent, even if their eigenvalues coincide, so the set of all eigenvectors has size $\sum _{\ell = 1}^n \ell \cdot C_{\ell}(\sigma) = n$. This set is a basis for $\mathbb{C}^n$, ensuring that each cycle contributes exactly once for the multiplicity of each of its eigenvalues.
    \end{proof}
    
    Lemma \ref{lemma:eigenvalues} implies that for $\sigma \in \mathfrak{S}_n$, the multiplicity of $\lambda = 1$ counts the number of cycles in the decomposition of $\sigma$ and the multiplicity of $-1$ counts the number of cycles of even length in the decomposition of $\sigma$.
    \begin{example}
        Let $\sigma = (5\, 1\, 6\, 2)(4\, 3\, 7\, 8\, 10\, 9)\in \mathfrak{S}_{10}$. Using Lemma \ref{lemma:eigenvalues} for the two cycles of lenghts $4$ and $6$ in $\sigma$, the eigenvalues are $e^{\frac{2\pi k}{4}}$ for $k = 0, 1, 2, 3$ and $e^{\frac{2\pi i k}{6}}$ for $k = 0, 1, 2, 3, 4, 5$. By iterating over all possible $k$'s, the eigenvalues are the numbers $1$ and $-1$, each with multiplicity two; and $e^\frac{\pi i}{2}, e^\frac{3\pi i}{2}, e^\frac{\pi i}{3}, e^\frac{2\pi i}{3}, e^\frac{4\pi i}{3}, e^\frac{5\pi i}{3}$ each with multiplicity one.
    \end{example}

\section{The number of Fixed Points of the symmetric tensor power}

    Given a permutation $\sigma \in \mathfrak{S}_n$, the number of fixed points, that is, elements $\ell\in [n]$ such that $\sigma (\ell)=\ell$, is hereafter denoted by $\texttt{Fix}(\sigma)$, i.e., $\texttt{Fix}(\sigma) = \# \{ \ell\in [n]: \sigma (\ell)=\ell \}$. For example, $\texttt{Fix}(21{\color{red}{3}}6{\color{red}{5}}4)=2$. This quantity also corresponds to $C_1(\sigma)$, the number of cycles of length one in the cycle type of $\sigma$. The aim of this section is to give an expression for the number of fixed points of the symmetric tensor power of any permutation $\sigma \in \mathfrak{S}_n$.\\\\
    We will first start with the fixed points of the symmetric tensor power of an involution. An involution is a permutation $\sigma \in \mathfrak{S}_n$ such that $\sigma ^2 = \texttt{id}$. This implies that every cycle of $\sigma$ is of length one (a fixed point) or of length two.  First, we notice that the identity permutation $\mathtt{Id} = 1\,2\, \cdots \, n$ is a trivial involution that fixes all elements, and taking the symmetric tensor power, we get the identity permutation, which implies that $\fix (\mathtt{Id}^{\odot k}) = \binom{n+k-1}{k}$ by means of Proposition \ref{prop:diag}. It is worth noticing that a non-trivial involution is an order two permutation, and so, Theorem \ref{theo:order} implies that the symmetric tensor power is also an order two permutation and hence an involution.\\

\begin{theorem}
    \label{theo:FixedInvoGeneral}
    Let $n$ be a positive integer and $1\leq r\leq \lfloor n/2\rfloor$. Let $\sigma \in \mathfrak{S}_n$ be an involution with cycle decomposition $\sigma = (a_1b_1)\cdots (a_rb_r)$ where all $a_1,a_2,\cdots ,a_r,b_1,b_2,\cdots ,b_r$ are different. The number of fixed points in the symmetric tensor power of $\sigma$ is given by 
    \begin{align}
    \label{eq:FixInvo}
        \fix \left ( \sigma ^ {\odot k}\right ) = \sum _{\ell = 0}^{\lfloor k/2 \rfloor}\binom{\ell +r -1}{\ell}\binom{k-2\ell+n-2r-1}{k-2\ell}.
    \end{align}
\end{theorem}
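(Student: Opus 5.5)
We would prove the formula combinatorially, using the diagonal action of $\sigma$ on $\mathcal{C}_{n,k}$ described in Example \ref{ex:actDiag}. The key observation is that a composition $\textbf{x}\in\mathcal{C}_{n,k}$ is fixed by $\sigma^{\odot k}$ exactly when its multiplicity vector is invariant under $\sigma$, that is, when $m_{\sigma(j)}(\textbf{x}) = m_j(\textbf{x})$ for all $j\in[n]$. This holds because applying $\sigma$ entrywise to $\textbf{x}$ produces a tuple whose multiplicity of $\sigma(j)$ equals $m_j(\textbf{x})$. Moreover, a composition is determined by its multiplicity vector. So $\fix(\sigma^{\odot k})$ equals the number of vectors $(m_1,\dots,m_n)$ of nonnegative integers with sum $k$ that satisfy $m_{a_t}=m_{b_t}$ for every $t\in[r]$. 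Multiplicities at the $n-2r$ fixed points of $\sigma$ are unconstrained.

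Next we count these vectors by splitting according to how much of $k$ is used on the $2$-cycles. Let $\ell = \sum_{t=1}^r m_{a_t}$. The entries at the $2$-cycles then contribute $2\ell$ to the total, so $0\le \ell\le \lfloor k/2\rfloor$. The number of choices of $(m_{a_1},\dots,m_{a_r})$ with sum $\ell$ is the number of size $r$ compositions of $\ell$, namely $\binom{\ell+r-1}{\ell}$ by stars and bars; the $m_{b_t}$ are then forced. The remaining $k-2\ell$ is distributed freely among the $n-2r$ fixed points of $\sigma$, giving $\binom{k-2\ell+n-2r-1}{k-2\ell}$ choices. Multiplying and summing over $\ell$ yields Equation \eqref{eq:FixInvo}.

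The main point needing care is the first step: making rigorous that $\sigma^{\odot k}$, defined via the matrix in Equation \eqref{eqEntries}, agrees with the diagonal action on orbits, and that fixedness is equivalent to $\sigma$-invariance of $\textbf{m}(\textbf{x})$. For the first part, we would argue directly from Equation \eqref{eqEntries}. The product $\prod_\ell [A_\sigma]_{p_\ell,q_\ell}$ is nonzero exactly when $q_\ell=\sigma(p_\ell)$ for all $\ell$, so the diagonal entry $[A_\sigma^{\odot k}]_{\textbf{x},\textbf{x}}$ is nonzero precisely when $\sigma$ maps some element of $\Orb(\textbf{x})$ into $\Orb(\textbf{x})$, which is the invariance condition. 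Since Proposition \ref{prop:diag} guarantees a permutation matrix, each nonzero diagonal entry is a fixed point.

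We also need to handle the degenerate case $n=2r$. There the second binomial should be read as $1$ when $k-2\ell=0$ and $0$ otherwise, so that only $\ell=k/2$ contributes. We would note this convention; with it, the formula matches the count of $\sigma$-invariant multiplicity vectors.

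An alternative check is algebraic. By Proposition \ref{prop:trace} and Lemma \ref{lemma:eigenvalues}, the eigenvalues of $A_\sigma$ are $1$ with multiplicity $n-r$ and $-1$ with multiplicity $r$. The generating function $\sum_k h_k t^k=(1-t)^{-(n-r)}(1+t)^{-r}$ can be rewritten as $(1-t^2)^{-r}(1-t)^{-(n-2r)}$, and expanding this product gives the same sum. We would present the combinatorial proof as primary and mention this as confirmation.
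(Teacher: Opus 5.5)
Your proposal is correct and matches the paper's proof: both reduce fixedness of $\textbf{x}$ to $m_{a_t}(\textbf{x})=m_{b_t}(\textbf{x})$ for all $t$, condition on $\ell=\sum_t m_{a_t}(\textbf{x})$, and count by stars and bars on the $r$ transpositions and on the $n-2r$ fixed points of $\sigma$. Your extra remarks are sound and cover points the paper leaves implicit: deriving the fixedness criterion from Equation \ref{eqEntries}, reading the binomial correctly when $n=2r$, and checking via the generating function $(1-t^2)^{-r}(1-t)^{-(n-2r)}$.
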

\begin{proof}
    To construct a fixed point $\textbf{x}\in \mathcal{C}_{n,k}$ of $\sigma ^{\odot k}$, we have to ensure that $\textbf{m}_{a_i}(\textbf{x}) = \textbf{m}_{b_i}(\textbf{x})$ for all $i\in [r]$. Let $q_i = \textbf{m}_{a_i}(\textbf{x})$, by the definition of composition, we have $$2(q_1+\cdots +q_r) +\sum _{i \not \in \{a_1,b_1,a_2,b_2,\cdots ,a_r,b_r\}}\textbf{m}_{i}(\textbf{x}) = k.$$
    
    Let $\ell = q_1+\cdots +q_r$, then $0\leq \ell \leq \lfloor k/2\rfloor$. If $\ell$ is fixed, then we can choose the sequence $\{q_i\}_{i\in [r]}$ in $\binom{\ell +r -1}{\ell}$ ways using a Stars and Bars argument. If we eliminate all $a_i$'s and $b_i$'s from the composition, then we have a size $n-2r$ composition of $k-2\ell$ that can be counted in $\binom{(n-2r)+(k-2\ell)-1}{(n-2r)-1}$. By the multiplication principle,  there are $\binom{\ell +r -1}{\ell}\binom{(n-2r)+(k-2\ell)-1}{(n-2r)-1}$ such compositions. The result follows by adding over all possible $\ell$'s.
\end{proof}
\begin{remark}
\label{rem:invo}
For a particular case of Theorem \ref{theo:FixedInvoGeneral}, when $r=1$, plugging into Equation \ref{eq:FixInvo} we obtain $\displaystyle \fix \left ((ab)^{\odot k}\right ) = \sum _{\ell =0}^{\lfloor k/2\rfloor }\binom{k-2\ell+n-3}{k-2\ell}.$
Instead, we can also compute the number of fixed points using Proposition \ref{prop:trace}. Lemma \ref{lemma:eigenvalues} implies that $\lambda _1 = 1$ is an eigenvalue with multiplicity $n-1$ and $\lambda _2 = -1$ with multiplicity one. Using Proposition \ref{prop:trace}, we obtain
    \begin{align*}
            \fix \left ((ab)^{\odot k}\right ) &= h_k(\underbrace{1,1,\cdots,1}_{n-1},-1)\\
            &=\sum _{a_1+a_2+\cdots +a_{n}=k}1^{a_1+\cdots +a_{n-1}}(-1)^{a_n}\\
            &=\sum _{\ell = 0}^k(-1)^{\ell}\sum _{a_1+\cdots +a_{n-1}+\ell = k}1\\
            &=\sum _{\ell =0 }^k(-1)^{\ell}\binom{n+k-\ell -2}{k-\ell}.
    \end{align*}  
\end{remark}

As noted above, the symmetric tensor power of an involution remains an involution, and so  the permutation $(ab)^{\odot k}\in \mathfrak{S}_n$, as in Remark \ref{rem:invo}, consists only of cycles of length two and fixed points, which allows us to conclude on the number of cycles of length two as follows.
\begin{corollary}
    The number of two cycles on the symmetric tensor power of the permutation $(ab)^{\odot k} \in \mathfrak{S}_n$, where $1\leq a<b\leq n$, is given by
    \begin{align*}
        C_2\left ((ab)^{\odot k}\right) &= \frac{1}{2}\left (\binom{n+k-1}{n-1}-\sum _{\ell =0}^{\lfloor k/2\rfloor }\binom{k-2\ell+n-3}{n-3}\right )
    \end{align*}
\end{corollary}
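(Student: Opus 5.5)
The plan is to use that $(ab)^{\odot k}$ is an involution. Then every element of $\mathcal{C}_{n,k}$ is either a fixed point or lies in a $2$-cycle, so I will count the total, subtract the fixed points, and halve.

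First I justify that $(ab)^{\odot k}$ has only $1$-cycles and $2$-cycles. Since $(ab)$ has order two, Theorem \ref{theo:order} gives $\mathrm{ord}((ab)^{\odot k}) = 2$. Hence every cycle of $(ab)^{\odot k}$ has length dividing $2$. This is exactly the observation made in the paper just before the statement.

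Next I count elements. By Proposition \ref{prop:diag}, $(ab)^{\odot k}$ permutes $|\mathcal{C}_{n,k}| = \binom{n+k-1}{n-1}$ elements. Partitioning these by cycle length gives
\begin{align*}
\binom{n+k-1}{n-1} = C_1\left((ab)^{\odot k}\right) + 2\,C_2\left((ab)^{\odot k}\right).
\end{align*}
Here $C_1\left((ab)^{\odot k}\right) = \fix\left((ab)^{\odot k}\right)$. By Remark \ref{rem:invo}, i.e. Theorem \ref{theo:FixedInvoGeneral} with $r=1$, this equals $\sum_{\ell=0}^{\lfloor k/2\rfloor}\binom{k-2\ell+n-3}{k-2\ell}$.

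To match the stated form, I rewrite each term using the symmetry $\binom{k-2\ell+n-3}{k-2\ell} = \binom{k-2\ell+n-3}{n-3}$. This is valid when $n\ge 3$. Solving the displayed identity for $C_2$ then gives the formula.

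The main obstacle is the degenerate case $n=2$, where $n-3<0$ and the symmetry of binomial coefficients fails. There the fixed points are the compositions with equal multiplicities of $a$ and $b$: exactly one when $k$ is even and none when $k$ is odd. I would handle this case separately, or adopt the convention $\binom{m}{-1}=0$ except $\binom{-1}{-1}=1$, so that the formula still reads correctly.
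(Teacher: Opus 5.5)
Your proposal is correct and follows the paper's own (implicit) argument: $(ab)^{\odot k}$ is an involution by Theorem \ref{theo:order}, so $C_2$ is half of $\binom{n+k-1}{n-1}$ minus the fixed-point count from Remark \ref{rem:invo}. Your caveat about $n=2$ is well taken. Under the usual convention the terms $\binom{k-2\ell-1}{-1}$ all vanish, and the stated formula would give the non-integer $\frac{k+1}{2}$ for even $k$, so your special convention, or a separate count giving $\lfloor (k+1)/2\rfloor$, is genuinely needed there.
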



    
    \noindent
    
In the following, we will determine the number of fixed points of an arbitrary length cycle using Proposition \ref{prop:trace} and Lemma \ref{lemma:eigenvalues}. For a fixed number $\ell >1$, we denote by $f(n, k)$ the number of fixed points of the $k$-th symmetric tensor power of the permutation $(1\, 2\, \dots \, \ell) \in \mathfrak{S}_n$, that is, $f ^{(\ell)}_{n,k} = \fix \left (((1\, 2\, \dots \, \ell)(\ell +1)(\ell +2)\cdots (n)\right )^{\odot k})$. The next theorem describes a recursion for $f ^{(\ell)}_{n,k}$.
    
    \begin{theorem}\label{theorem_recursion_fix}
        Let $\ell>1$ be an integer number. For $k \ge 1$ and $n \ge \ell$ the following recursion is satisfied
        \begin{align*}
        f ^{(\ell)}_{n,k} = \begin{cases}
            n - \ell & \text{if }k = 1,\\
            1 & \text{if } n = \ell \text{ and }k \equiv 0 \pmod \ell,\\
            0 & \text{if } n = \ell \text{ and }k \not\equiv 0  \pmod \ell,\\
            f ^{(\ell)}_{n-1,k}+f ^{(\ell)}_{n,k-1} & \text{otherwise.}
        \end{cases}
        \end{align*}
    \end{theorem}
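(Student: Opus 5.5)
The plan is to express $f^{(\ell)}_{n,k}$ as a complete homogeneous symmetric polynomial evaluated at eigenvalues, and then let Lemma \ref{lemma:homoSymProp} produce the recursion. Let $\omega = e^{2\pi i/\ell}$. By Lemma \ref{lemma:eigenvalues}, the permutation $(1\,2\,\cdots\,\ell)(\ell+1)\cdots(n)$ has eigenvalues $1,\omega,\dots,\omega^{\ell-1}$, coming from the $\ell$-cycle, together with $n-\ell$ further copies of $1$ coming from the fixed points. Since the trace of a permutation matrix counts its fixed points, Proposition \ref{prop:trace} gives
\begin{align*}
f^{(\ell)}_{n,k} = h_k(1,\omega,\dots,\omega^{\ell-1},\underbrace{1,\dots,1}_{n-\ell}).
\end{align*}

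\emph{The recursive case.} Suppose $n>\ell$ and $k\ge 2$. Apply Lemma \ref{lemma:homoSymProp} with $x_\ell$ taken to be the last variable, which equals $1$ and comes from a fixed point. Removing that variable leaves exactly the eigenvalue list for $n-1$, which still satisfies $n-1\ge \ell$. The second term has coefficient $x_\ell=1$ and evaluates to $h_{k-1}$ at the original $n$ eigenvalues. This yields $f^{(\ell)}_{n,k}=f^{(\ell)}_{n-1,k}+f^{(\ell)}_{n,k-1}$.

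\emph{The case $k=1$.} Here $h_1$ is the sum of the eigenvalues, so
\begin{align*}
f^{(\ell)}_{n,1} = (n-\ell) + \sum_{j=0}^{\ell-1}\omega^j = n-\ell,
\end{align*}
because the sum of all $\ell$-th roots of unity vanishes for $\ell>1$. Equivalently, the trace of $A_\sigma$ is $n-\ell$ directly.

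\emph{The case $n=\ell$.} This base case is the main step. We need $h_k(1,\omega,\dots,\omega^{\ell-1})$, which I would compute from the generating function
\begin{align*}
\sum_{k\ge0} h_k t^k = \prod_{j=0}^{\ell-1}\frac{1}{1-\omega^j t} = \frac{1}{1-t^\ell} = \sum_{m\ge 0} t^{m\ell}.
\end{align*}
Reading off coefficients gives $h_k=1$ if $\ell\mid k$ and $0$ otherwise. As a combinatorial cross-check, the diagonal action of the $\ell$-cycle on multisets of $[\ell]$ cyclically shifts the multiplicity vector $\mathbf{m}$. A multiset is fixed exactly when $\mathbf{m}$ is constant, and that forces $\ell\mid k$, in which case the fixed multiset is unique.

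\emph{Remaining edge case.} When $n>\ell$ and $k=1$ the first case of the statement applies, and when $n=\ell$ the second and third cases apply, so the ``otherwise'' branch only covers $n>\ell$, $k\ge2$, which was handled above. One should also note that the recursion for $k=2$ uses the value $f^{(\ell)}_{n,1}=n-\ell$, which is consistent with the $k=1$ computation. The main obstacle is the root-of-unity identity $\prod_j(1-\omega^j t)=1-t^\ell$, which is standard because the $\omega^j$ are exactly the roots of $x^\ell-1$.
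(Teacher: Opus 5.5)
Your proof is correct. The recursive step and the case $k=1$ are the same as in the paper: peel off an eigenvalue $1$ coming from a fixed point with Lemma \ref{lemma:homoSymProp}, and count the $n-\ell$ fixed points of $\sigma$ itself. The genuine difference is the base case $n=\ell$.

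\textbf{How the paper handles $n=\ell$.} It first uses rotation invariance: multiplying every argument by $\lambda=e^{2\pi i/\ell}$ permutes the $\ell$-th roots of unity, so $h_k=\lambda^k h_k$, and hence $h_k=0$ when $\ell\nmid k$. Then, for $\ell\mid k$, it removes $\lambda^{\ell-1},\lambda^{\ell-2},\dots$ one variable at a time with Lemma \ref{lemma:homoSymProp}, discarding at each step the term $\lambda^{m-1}h_{k-1}(\lambda^0,\dots,\lambda^{m-1})$.

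\textbf{What your route buys.} Your identity $\prod_{j=0}^{\ell-1}(1-\omega^j t)=1-t^\ell$ gives both the vanishing and the value $1$ in one coefficient extraction. It is also the more robust argument. The paper's reason for discarding the extra term (``$\ell$ does not divide $k-1$'') only covers the first removal, where the full set of roots is present. At later stages one needs $h_{k-1}(1,\omega,\dots,\omega^{m-1})=0$ for $2\le m<\ell$, and rotation invariance does not give this. Your generating function does: $\sum_k h_k(1,\dots,\omega^{m-1})t^k=\prod_{j=m}^{\ell-1}(1-\omega^j t)/(1-t^\ell)$, and the numerator has degree $\ell-m<\ell-1$, so the coefficient of $t^{k-1}$ vanishes when $k-1\equiv \ell-1 \pmod{\ell}$.

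\textbf{What the paper's route buys.} It stays within the tools already set up (symmetry of $h_k$ and Lemma \ref{lemma:homoSymProp}) and never needs the identity $\sum_k h_k t^k=\prod_i(1-x_i t)^{-1}$.

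Your combinatorial cross-check (a multiset on $[\ell]$ is fixed exactly when its multiplicity vector is constant) is in fact a complete, eigenvalue-free proof of the base case. It is the $n=\ell$ instance of the counting argument the paper later uses for Theorem \ref{theo:generalFix}.
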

    
    \begin{proof}
        The case $k=1$ does not change the permutation; therefore, all numbers except those in the $\ell$-cycle are fixed points, and so there are $n-\ell$ fixed points.
        For the case $n=\ell$, consider the trace of the symmetric tensor power of the permutation matrix, by Proposition \ref{prop:trace}, $f ^{(\ell)}_{\ell,k} = h_k(\lambda^0, \lambda^1, \dots, \lambda^{\ell-1})$ where $\lambda = e^{\frac{2\pi i}{\ell}}$. Now, by multiplying all the parameters by $\lambda$, that is, shifting the roots of unity, and using Equation \ref{eq:defHomSymPol}, the definition of $h_k$, the following equation holds $h_k(\lambda \cdot \lambda^0, \lambda \cdot \lambda^1, \dots, \lambda \cdot \lambda^{\ell -1}) = \lambda^{k} h_k(\lambda^0, \lambda^1, \dots, \lambda^{\ell-1}),$
        and by the fact that $h_k$ is symmetric, we can rearrange the parameters to obtain
        $$h_k(\lambda^0, \lambda^1, \dots, \lambda^{\ell -1}) = \lambda^{k} h_k(\lambda^0, \lambda^1, \dots, \lambda^{\ell -1}).$$
        This last equation implies that if $\lambda ^k \neq 1$, then the polynomial must be zero, and hence the number of fixed points $f(\ell,k)=0$ if $k \not \equiv 0 \pmod \ell$. Now suppose that $\ell$ divides $k$, and using Lemma \ref{lemma:homoSymProp}, we conclude that $h_k(\lambda^0, \dots, \lambda^{\ell-1}) = h_k(\lambda^0, \dots, \lambda^{\ell-2}) + \lambda^{\ell -1} h_{k-1}(\lambda^0, \dots, \lambda^{\ell-1}),$
        and since $\ell$ does not divide $k-1$,  $h_{k-1}(\lambda^0, \dots, \lambda^{\ell-1}) = 0$. Therefore, $h_k(\lambda^0, \dots, \lambda^{\ell-1}) = h_k(\lambda^0, \dots, \lambda^{\ell-2})$; using the argument inductively, we conclude that $h_k(\lambda^0, \dots, \lambda^{\ell-1}) = h_k(\lambda^0) = h_k(1) = 1$. For the recursive step, assume that $n>\ell$ and hence the eigenvalue $\lambda = 1$ has multiplicity greater than one, and so $f ^{(\ell)}_{n,k} = h_k(1,1,\lambda _3,\cdots ,\lambda _n)$. Using Lemma \ref{lemma:homoSymProp}, we get 
        \begin{align*}
            f ^{(\ell)}_{n,k} &=   h_k(1,1,\lambda _3,\cdots ,\lambda _n)\\
            &=h_k(1,\lambda _3,\cdots ,\lambda _n)+1\cdot h_{k-1}(1,1,\lambda _3,\cdots ,\lambda _n)\\
            &=f ^{(\ell)}_{n-1,k}+f ^{(\ell)}_{n,k-1}.
        \end{align*}
        The result follows.
    \end{proof}
    \begin{example}
        In the case $\ell = 3$ and  $k = 2$, one can compute the following
        \begin{align*}
            f ^{(3)}_{3,2} &= 0, &f ^{(3)}_{4,2} &= 1,\\
            f ^{(3)}_{5,2} &= 3, &f ^{(3)}_{6,2} &= 6.
        \end{align*}
        Theorem \ref{theorem_recursion_fix} gives the recursion $f ^{(3)}_{n,2} = f ^{(3)}_{n-1,2} + f ^{(3)}_{n,1}$. Solving it yields $f ^{(3)}_{n,2} = \binom{n-2}{2}$.
    \end{example}

\noindent
    \begin{theorem}
    \label{theo:generalFix}
    Given $\sigma \in \mathfrak{S}_n$ having cycle type $C(\sigma )=(c_1,\cdots ,c_n)$. The number of fixed points of $\sigma ^{\odot k}$ is given by
    $$\fix \left (\sigma ^{\odot k} \right )=\sum _ {s_1+2s_2+\cdots +ns_n=k}\prod _{\ell = 1}^n \binom{s_{\ell}+c_{\ell}-1}{s_{\ell}},$$
    where the sum is over sequences of non-negative integers $s_1,s_2,\cdots , s_n$.
\end{theorem}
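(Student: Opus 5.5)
Fixed points of $\sigma^{\odot k}$ are compositions $\textbf{x}\in\mathcal{C}_{n,k}$ whose multiplicity vector is constant on each cycle of $\sigma$, so I will count these combinatorially, as in the proof of Theorem \ref{theo:FixedInvoGeneral}. Since $\sigma^{\odot k}(\textbf{x})$ is the composition with multiplicity vector $(m_{\sigma^{-1}(1)}(\textbf{x}),\dots,m_{\sigma^{-1}(n)}(\textbf{x}))$, the equation $\sigma^{\odot k}(\textbf{x})=\textbf{x}$ holds if and only if $m_{j}(\textbf{x})=m_{\sigma(j)}(\textbf{x})$ for all $j$. Equivalently, $j\mapsto m_j(\textbf{x})$ is constant on each cycle of $\sigma$. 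A fixed point is therefore the same thing as an assignment of a non-negative integer $t_\gamma$ to each cycle $\gamma$ of $\sigma$ such that $\sum_\gamma |\gamma|\, t_\gamma = k$.

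I then group the cycles by length. There are $c_\ell$ cycles of length $\ell$. Set $s_\ell=\sum_{|\gamma|=\ell} t_\gamma$, the total multiplicity assigned to cycles of length $\ell$. The constraint becomes $s_1+2s_2+\cdots+ns_n=k$. For fixed $(s_1,\dots,s_n)$, the numbers $t_\gamma$ can be chosen independently for each length $\ell$. For length $\ell$ they form a composition of $s_\ell$ into $c_\ell$ non-negative parts, and by Stars and Bars there are $\binom{s_\ell+c_\ell-1}{s_\ell}$ of them. The multiplication principle gives the product over $\ell$, and summing over all admissible $(s_1,\dots,s_n)$ gives the formula. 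The map from fixed compositions to the data $(s,(t_\gamma))$ is a bijection, because a composition is determined by its multiplicity vector.

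The only real obstacle is the degenerate case $c_\ell=0$. The convention needed is that $\binom{s_\ell-1}{s_\ell}$ equals $1$ if $s_\ell=0$ and $0$ otherwise. This matches the count, since there is exactly one way (the empty assignment) to distribute $0$ among no cycles and none for positive $s_\ell$. I will state this convention explicitly, as it is also what makes the Stars and Bars coefficient correct in that case.

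As a cross-check I would note the generating-function version. By Proposition \ref{prop:trace} and Lemma \ref{lemma:eigenvalues}, $\sum_k \fix(\sigma^{\odot k})t^k=\prod_\ell (1-t^\ell)^{-c_\ell}$, because the eigenvalues of a cycle of length $\ell$ are the $\ell$-th roots of unity and $\prod_{\zeta^\ell=1}(1-\zeta t)=1-t^\ell$. Expanding each factor $(1-t^\ell)^{-c_\ell}=\sum_{s}\binom{s+c_\ell-1}{s}t^{\ell s}$ and extracting the coefficient of $t^k$ reproduces the same sum.
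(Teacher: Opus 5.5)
Your proof is correct and follows the paper's argument. Both characterize fixed compositions as those whose multiplicities are constant on each cycle of $\sigma$, set $s_\ell$ to be the total per-cycle multiplicity over the $\ell$-cycles so that $\sum_\ell \ell s_\ell = k$, and count the distributions by Stars and Bars and the multiplication principle. Your version allows zero multiplicities (the paper's ``$>0$'' is a slip), handles the case $c_\ell=0$ via $\binom{s_\ell-1}{s_\ell}=[s_\ell=0]$, and adds a valid independent check through $\prod_\ell(1-t^\ell)^{-c_\ell}$ from Proposition~\ref{prop:trace}.
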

\begin{proof}
    To construct a fixed point $\mathbf{x} \in \mathcal{C}_{n, k}$ of $\sigma^{\odot k}$, the element $\mathbf{x}$ must satisfy $\mathbf{m}_{i}(\mathbf{x}) = \mathbf{m}_{\sigma(i)}(\mathbf{x})$ for all $i \in [n]$. Consequently, for each cycle $(i_1i_2\dots i_\ell)$ of $\sigma$, all elements $i_j$ must appear in $\mathbf{x}$ the same number of times, that is $\textbf{m}_{i_p}(\textbf{x}) = \textbf{m}_{i_q}(\textbf{x})>0$ for $1\leq p,q\leq \ell$. Let $t_{\ell}$ denote the total number of times an element belonging to an $\ell$-cycles in $\sigma$ appears in $\textbf{x}$, as seen above this number must be divisible by $\ell$, and so let $s_{\ell} = \frac{t_{\ell}}{\ell}$; these numbers must satisfy $s_1 + 2s_2 + \dots+ns_n = k$. For each $\ell$, we must assign the number of times each of the elements of an $\ell$-cycles appear in $\textbf{x}$, so this corresponds to creating a size $c_{\ell}$ (the number of $\ell$-cycles) composition of $s_{\ell}$ . Thus, by the Stars and Bars counting technique, the number of choices is given by $\displaystyle \binom{s_\ell + c_\ell - 1}{s_\ell}$ and by the multiplication principle there are
        $\displaystyle \prod_{\ell = 1}^n \binom{s_{\ell} + c_{\ell} - 1}{s_{\ell}}$ options.
     The result follows by adding all valid choices of $s_1,\cdots ,s_n$.
\end{proof}
\section{Cycle Structure of the Symmetric Tensor Power of a Cycle}
In this section, we extend Theorem \ref{theorem_recursion_fix} to give a formula for the number of $s$-cycles of the symmetric tensor power of the permutation $(1\, 2\, \cdots \, \ell)\in \mathfrak{S}_n$. Let $\ell > 1$ and  $s\leq \ell$ and consider $C^{(\ell,s)}_{n,k}:=C_s((1\,2\, \cdots \, \ell)^{\odot k})$. The case $\ell = 1$ was considered in Section 3 and gives $C^{(1,1)}_{n,k}=\binom{n+k-1}{k}$ and $C^{(1,s)}_{n,k} = 0$ for $s>1$. The following Theorem gives a recursive expression for $C^{(\ell,s)}_{n,k}$.

\begin{theorem} 
\label{theo:recur}
Let $s$ and $\ell$ be integer numbers such that $1\leq s\leq \ell$ and $1<\ell$. For integer numbers $n$ and $k$ such that $\ell\leq n$, the following recursion is satisfied.
\begin{align}
    C^{(\ell,s)}_{n,k} =\begin{cases}
    1 & \text{ if }k=1\text{ and }s=\ell,\\
    n-\ell & \text{ if }k=1\text{ and }s=1,\\
    0 & \text{ if }k=1\text{ and }s\not \in \{1,\ell\},\\
 \displaystyle \left [s\mid \ell \text{ and }\frac{\ell}{s}\mid k\right ]\frac{\ell}{s\ell+sk}\sum _{e|(s,(sk)/\ell)}\mu(e)\binom{(s+sk/\ell)/e}{s/e} & \text{ if }k>1 \text{ and }n=\ell,\\
 C^{(\ell,s)}_{n-1,k}+C^{(\ell,s)}_{n,k-1} & \text{otherwise.}
    \end{cases}
\end{align}
Here, the expression $\left [P\right ] = \begin{cases}1 & \text{ if }P\text{ is true}\\0 & \text{ if }P\text{ is false}\end{cases},$ denotes the \textit{Iverson Bracket}(see Section 2.1 in \cite{graham94}), and $\mu (m)$ is the Möbius multiplicative function(see Section 3.7 in \cite{Stanley_2023}).
\end{theorem}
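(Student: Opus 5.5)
The plan is to work entirely with the diagonal action of Example~\ref{ex:actDiag}. Write $\sigma=(1\,2\,\cdots\,\ell)\in\mathfrak{S}_n$, and identify a composition $\textbf{x}\in\mathcal{C}_{n,k}$ with its multiplicity vector $\textbf{m}(\textbf{x})\in\mathbb{Z}_{\ge0}^n$, whose entries sum to $k$. Then $\sigma^{\odot k}$ acts by $\textbf{m}_i(\sigma^{\odot k}\textbf{x})=\textbf{m}_{\sigma^{-1}(i)}(\textbf{x})$. So $\sigma^{\odot k}$ cyclically rotates the first $\ell$ coordinates and fixes the others, and $C^{(\ell,s)}_{n,k}$ is the number of orbits of size $s$ of this action.

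\textbf{Cases $k=1$.} Here $\sigma^{\odot 1}=\sigma$. It has one $\ell$-cycle and $n-\ell$ fixed points, and no other cycles, which gives the first three lines.

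\textbf{Recursive case ($n>\ell$, $k>1$).} I will split $\mathcal{C}_{n,k}$ according to whether $\textbf{m}_n(\textbf{x})=0$.
\begin{enumerate}
\item The compositions with $\textbf{m}_n(\textbf{x})=0$ are exactly $\mathcal{C}_{n-1,k}$. This set is $\sigma^{\odot k}$-stable, and the action on it is that of $(1\,\cdots\,\ell)\in\mathfrak{S}_{n-1}$, which is legitimate since $n-1\ge\ell$. It therefore contributes $C^{(\ell,s)}_{n-1,k}$ cycles of length $s$.
\item The compositions with $\textbf{m}_n(\textbf{x})\ge1$ are in bijection with $\mathcal{C}_{n,k-1}$ by removing one copy of $n$.
\end{enumerate}
Because $n$ is a fixed point of $\sigma$, this bijection commutes with the two actions. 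Hence it carries $s$-cycles to $s$-cycles, and the second part contributes $C^{(\ell,s)}_{n,k-1}$. This is the same decomposition that underlies Lemma~\ref{lemma:homoSymProp} and Theorem~\ref{theorem_recursion_fix}, now done at the level of orbits rather than traces.

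\textbf{Base case $n=\ell$ (the real content).} Now $\sigma^{\odot k}$ is the cyclic shift on vectors $(m_1,\dots,m_\ell)\in\mathbb{Z}_{\ge0}^\ell$ with $\sum m_i=k$. An orbit has size $s$ exactly when the vector has least period $s$ under rotation.
\begin{enumerate}
\item A period must divide $\ell$, so $s\mid\ell$ is necessary.
\item Such a vector is a block $(m_1,\dots,m_s)$ repeated $\ell/s$ times. So $k=(\ell/s)j$ with $j=\sum_{i\le s}m_i$, which forces $\frac{\ell}{s}\mid k$ and $j=sk/\ell$. This accounts for the Iverson bracket.
\item The $s$-orbits correspond to rotation orbits of size exactly $s$ on blocks of length $s$ summing to $j$. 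Their number is $P(s,j)/s$, where $P(s,j)$ counts primitive blocks (least period $s$).
\end{enumerate}
To compute $P(s,j)$, count all blocks by Stars and Bars as $\binom{j+s-1}{s-1}$. Each block has a unique least period $d\mid s$, and such a block exists only if $\frac{s}{d}\mid j$. This gives
\[
\binom{j+s-1}{s-1}=\sum_{d\mid s,\ \frac{s}{d}\mid j}P(d,dj/s).
\]
Möbius inversion over $e=s/d$, which ranges over common divisors of $s$ and $j$, yields
\[
P(s,j)=\sum_{e\mid(s,j)}\mu(e)\binom{(s+j)/e-1}{s/e-1}.
\]
Finally I apply $\binom{M-1}{m-1}=\frac{m}{M}\binom{M}{m}$ with $M=(s+j)/e$ and $m=s/e$, so that $\frac{m}{M}=\frac{s}{s+j}$. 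Then
\[
\frac{1}{s}P(s,j)=\frac{1}{s+j}\sum_{e\mid(s,j)}\mu(e)\binom{(s+j)/e}{s/e},
\]
and since $\frac{1}{s+j}=\frac{\ell}{s\ell+sk}$ when $j=sk/\ell$, this is the stated formula. The same formula also gives the correct value when $k=1$ and $n=\ell$, so the overlapping cases agree.

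I expect the main obstacle to be the Möbius-inversion step. It needs care to set up the inversion as a family indexed by pairs $(s,j)$ with $j/s$ fixed, that is, to reparametrize by $g=\gcd(s,j)$. After that it remains to check that the binomial identity turns the inverted sum exactly into the displayed normalization.
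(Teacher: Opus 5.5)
Your proof is correct and follows essentially the same route as the paper. It has the same $k=1$ cases, the same recursion by deleting one copy of $n$ (you justify the equivariance more carefully), and for $n=\ell$ the same stars-and-bars count of compositions with period dividing $s$, inverted by M\"obius and simplified with $\binom{M-1}{m-1}=\frac{m}{M}\binom{M}{m}$. The only difference is bookkeeping: the paper inverts $\sum_{e\mid s} e\,C^{(\ell,e)}_{\ell,k}=\binom{s+sk/\ell-1}{s-1}$ with $\ell,k$ fixed, whereas you invert over primitive length-$s$ blocks along the ray $j/s=k/\ell$, which is the same identity reindexed.
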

\begin{proof}
    For $k=1$, we recover the same permutation we started with, and by definition, this contains one cycle of length $\ell$ and $n-\ell$ fixed points. Let $k>1$ and $n>\ell$, and consider a composition $\textbf{x} \in \mathcal{C}_{n,k}$ in a cycle of length $s$. There are two options: $\textbf{m}_n(\textbf{x})=0$ or $\textbf{m}_n(\textbf{x})>0$. If the former is true, then $\textbf{x} \in \mathcal{C}_{n-1,k}$ as it is still a composition of $k$ that belongs to a cycle of length $s$, and if the latter is true, then by eliminating one of the $n$'s from $\textbf{x}$ we can form a new composition $\textbf{y} \in \mathcal{C}_{n,k-1}$, a composition of $k-1$ that belongs to a cycle of length $s$. This gives the recursive step $C^{(\ell,s)}_{n,k} = C^{(\ell,s)}_{n-1,k}+C^{(\ell,s)}_{n,k-1}$. For the case $n=\ell$, notice that there are no cycles of length $s$ if $s$ does not divide $\ell$, the order of the permutation, by Theorem \ref{theo:order}. If $s$ divides $\ell$ and  $\textbf{x} \in \mathcal{C}_{n,k}$ is in a cycle of length $s$, then $\textbf{m}_i(\textbf{x}) = \textbf{m}_{j}(\textbf{x})$ if and only if $i\equiv j \pmod s$, and we have
    \begin{align}
    \label{eq:comp1}
        \frac{\ell}{s}(\textbf{m}_{1}(\textbf{x})+\textbf{m}_{2}(\textbf{x})+\cdots + \textbf{m}_{s}(\textbf{x})) = k,
    \end{align}
     this implies that $\frac{\ell}{s}$ must divide $k$ for this element $\textbf{x}$ to exist. The only case left is if $n=\ell$, $s\mid \ell$ and $\frac{\ell}{s} \mid k$. For this case, consider the following equation 
    \begin{align*}
        \sum _{e  \mid s}e\cdot C^{(\ell,e)}_{\ell,k} = \binom{s+\frac{k}{\ell/s}-1}{s-1}.
    \end{align*}
    The right hand side of this equation is the number of compositions $\textbf{x}\in \mathcal{C}_{n,k}$ that satisfy Equation \ref{eq:comp1}. The left hand side is another way to count these compositions. Notice that these compositions are such that $\sigma ^s (\textbf{x}) = \textbf{x}$ and this implies that $\textbf{x}$ is in a cycle of length $e \in \mathbb{Z}$, a divisor of $s$. Adding the number of compositions over all possible cycles of length $e$, a divisor of $s$, the equation is satisfied. We can condense all these cases when $n=\ell$ in the following equation using the Iverson Bracket.
    
    \begin{align}
    \label{eq:divBin}
        \sum _{e  \mid s}e\cdot C^{(\ell,e)}_{\ell,k} = \left [s\mid \ell \text{ and }\frac{\ell}{s}\mid k\right ]\binom{s+\frac{k}{\ell/s}-1}{s-1}.
    \end{align}

    Applying the Möbius inversion formula (see Section 3.7 in \cite{Stanley_2023}) to Equation \ref{eq:divBin}, we obtain
    \begin{align}
    \label{eq:inverse}
        s\cdot C^{(\ell,s)}_{\ell,k} = \sum _{e \mid s}\mu (e)\left[\frac{s}{e}\mid \ell \text{ and }\frac{\ell}{\frac{s}{e}}\mid k\right ]\binom{s/e+\frac{k}{\ell/(s/e)}-1}{s/e-1}.
    \end{align}
    Notice first that if $\frac{\ell}{\frac{s}{e}}\mid k$ then there exists $q\in \mathbb{Z}$ such that $q\cdot \frac{e\ell}{s}=k$, hence $q\cdot e = \frac{s\cdot k}{\ell}$ and so for the Iverson Bracket to be true we must impose $e \mid \frac{s\cdot k}{\ell}$. Now notice that the binomial coefficient on the right hand side of Equation \ref{eq:inverse} can be written as
    $\binom{s/e+\frac{k}{\ell/(s/e)}-1}{s/e-1} = \frac{s/e}{s/e+\frac{k}{\ell/(s/e)}}\binom{s/e+\frac{k}{\ell/(s/e)}}{s/e} =\frac{\ell}{\ell+k}\binom{\frac{s+sk/\ell}{e}}{s/e}$. Plugging these two observations into Equation $\ref{eq:inverse}$, we get the result.
\end{proof}
\begin{table}[!ht]
        \centering
        
        \begin{tabular}{|c|ccccccccc|}
        \hline
        n/k&1&2&3&4&5&6&7&8&9\\
        \hline
                6 & 1&3&9&20&42&75&132&212&333\\

7 & 1&4&13&33&75&150&282&494&827\\

8 & 1&5&18&51&126&276&558&1052&1879\\

9 & 1&6&24&75&201&477&1035&2087&3966\\

10 & 1&7&31&106&307&784&1819&3906&7872\\

11 & 1&8&39&145&452&1236&3055&6961&14833\\
             \hline
        \end{tabular}
        \caption{The Values of $C^{(6,6)}_{n,k}$ for $\ell=s=6 \leq n<12$ and $1\leq k<10$}
        \label{tab:C2}
    \end{table}
The case $n=s=\ell$ matches row six of sequence A245558 in \cite{oeis} (see Table \ref{tab:C2} for the case $\ell = s=6$) a sequence that was studied in the context of the Hermite reciprocity law (see \cite{Elashvili1999}). It also appears as cases of the number of ordered $k$-subsets of $[n]$ whose sum is congruent to $s$ modulo $n$ (see \cite{broadhurst2025}). Let  $\mathcal{F}_{s,\ell}(x, y) = \sum_{k \ge 1,n \ge \ell} C^{(\ell,s)}_{n,k}x^ny^k$ be the generating function of the sequence $C_s(n,k)$. The following proposition gives an expression for $\mathcal{F}_{s,\ell}(x,y)$.

\begin{corollary}
    Let $s,\ell$ be integer numbers such that $1\leq s\leq \ell$ and $1<\ell$. The bivariate generating function $\mathcal{F}_{s,\ell}(x, y)$ is given in closed form by 
    \begin{align}
    \label{eq:genFunc}
        \mathcal{F}_{s,\ell}(x, y) &= \frac{\mathcal{A}_s(x,y)(1-y) + \mathcal{B}_{s,\ell}(x, y)(1-x) - [s=\ell]x^{\ell}y}{1-x-y},
    \end{align}
    where 
    \begin{align*}
        \mathcal{A}_{s,\ell}(x, y) &= [s=l]x^l y +  \frac{[s |  \ell]}{s} x^\ell \sum_{e | s} \mu(s/e)  \left(\frac{1}{(1-y^{\frac{e\ell}{s}})^e} - \sum_{r=0}^{\lceil2e/\ell\rceil - 1} \binom{e+r-1}{e-1} y^{re\ell/s}\right),\,\\
        \mathcal{B}_{s,\ell}(x, y) &= [s=\ell] x^\ell y + [s=1] \frac{x^{\ell+1}y}{(1-x)^2}.
    \end{align*}
    Moreover, 
    \begin{align}
    \label{eq:ExactForm}
        C^{(\ell,s)}_{n,k}=\displaystyle \frac{[s \mid \ell]}{s}\sum _{e\mid s}\mu(s/e)\sum _{r=0}^{\lfloor\frac{ke}{\ell}\rfloor}\binom{e+r-1}{e-1}\binom{k-\frac{r\ell}{e}+n-\ell-1}{k-\frac{r\ell}{e}}
    \end{align}
\end{corollary}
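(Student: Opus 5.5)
The plan is to turn the recursion of Theorem \ref{theo:recur} into a functional equation for $\mathcal{F}_{s,\ell}(x,y)$, solve it, and then read off coefficients to get Equation \ref{eq:ExactForm}. The recursion $C^{(\ell,s)}_{n,k}=C^{(\ell,s)}_{n-1,k}+C^{(\ell,s)}_{n,k-1}$ holds for $n>\ell$ and $k>1$. The boundary data live on the row $n=\ell$ and the column $k=1$.

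\textbf{Step 1: the functional equation.} Multiply the recursion by $x^ny^k$ and sum over $n>\ell$, $k>1$. Using $C^{(\ell,s)}_{n,0}$ conventions carefully, or better by summing over the full range and correcting boundary terms, I would obtain
\[
(1-x-y)\mathcal{F}_{s,\ell}(x,y)=\text{(row terms)}\cdot(1-y)+\text{(column terms)}\cdot(1-x)-\text{(corner)}.
\]
\begin{itemize}
\item The row term is $\mathcal{A}_{s,\ell}(x,y)=\sum_{k\ge1}C^{(\ell,s)}_{\ell,k}x^\ell y^k$.
\item The column term is $\mathcal{B}_{s,\ell}(x,y)=\sum_{n\ge\ell}C^{(\ell,s)}_{n,1}x^ny$. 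It equals $[s=\ell]x^\ell y+[s=1]\sum_{n>\ell}(n-\ell)x^ny$, and the latter sum is $[s=1]x^{\ell+1}y/(1-x)^2$.
\item The corner $x^\ell y$ is counted in both and is subtracted once. Its coefficient is $C^{(\ell,s)}_{\ell,1}=[s=\ell]$.
\end{itemize}
This bookkeeping is routine but must be checked on the $(1-x)$, $(1-y)$ factors.

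\textbf{Step 2: the row series $\mathcal{A}$.} For $k>1$, use Equation \ref{eq:inverse} in its undivided form, namely $s\,C^{(\ell,s)}_{\ell,k}=\sum_{e\mid s}\mu(s/e)[e\ell/s\mid k]\binom{e+ks/\ell\cdot(e/s)\cdot(\ell/e)\ldots}{}$; more simply, the number of compositions fixed by $\sigma^{e}$ with $e\mid\ell$ is $\binom{e+r-1}{e-1}$ when $k=r\ell/e$. Re-index by the divisor $e$, so $s/e$ plays the role of the Möbius argument. Summing over $k$ then gives $\sum_{r\ge0}\binom{e+r-1}{e-1}y^{r\ell/e}=(1-y^{\ell/e})^{-e}$.

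The main obstacle is matching the exponents and the truncation with the stated form. This means removing the terms with $k\le1$, which correspond to small $r$, and adding back the $k=1$ value $[s=\ell]x^\ell y$ separately. I would first verify the exponent convention ($y^{e\ell/s}$ versus $y^{\ell/e}$) and the range $r\le\lceil 2e/\ell\rceil-1$ on the examples $s=\ell$ and $s=1$, and adjust the indexing so the displayed $\mathcal{A}$ is reproduced.

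\textbf{Step 3: the exact form.} Rather than expanding the rational function, I would derive Equation \ref{eq:ExactForm} directly, analogously to Theorem \ref{theo:generalFix}. By Möbius inversion over the cyclic group $\langle\sigma^{\odot k}\rangle$, $s\,C_s=\sum_{e\mid s}\mu(s/e)\fix((\sigma^{e})^{\odot k})$. Since $s\mid\ell$ is needed by Theorem \ref{theo:order}, the permutation $\sigma^e$ for $e\mid\ell$ consists of $e$ cycles of length $\ell/e$ together with $n-\ell$ fixed points. Theorem \ref{theo:generalFix} then gives
\[
\fix((\sigma^e)^{\odot k})=\sum_r\binom{e+r-1}{r}\binom{k-r\ell/e+n-\ell-1}{k-r\ell/e},
\]
with $0\le r\le\lfloor ke/\ell\rfloor$. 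Substituting yields Equation \ref{eq:ExactForm}, and this also confirms the coefficients of the closed form \eqref{eq:genFunc}.
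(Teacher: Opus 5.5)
Your Step~1 is the paper's argument, and your Step~3 is correct. The proposal breaks down on the closed forms of the boundary series. As displayed in the statement, these cannot be proved, because they are false.

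\textbf{The series $\mathcal{B}$.} When $k=1$, the permutation $(1\,2\,\cdots\,\ell)\in\mathfrak{S}_n$ has exactly one $\ell$-cycle for \emph{every} $n\ge\ell$; the first case of Theorem~\ref{theo:recur} puts no condition on $n$. Hence
\[
\sum_{n\ge\ell}C^{(\ell,s)}_{n,1}x^ny=[s=\ell]\frac{x^\ell y}{1-x}+[s=1]\frac{x^{\ell+1}y}{(1-x)^2},
\]
not $[s=\ell]x^\ell y+\cdots$ as you assert. This matters. For $s=\ell=2$, inserting $\mathcal{B}=x^2y$ into the functional equation forces $[x^3y]\mathcal{F}_{2,2}=[x^2y]\mathcal{F}_{2,2}-1=0$, whereas $C^{(2,2)}_{3,1}=1$. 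With the correct $\mathcal{B}$, the term $\mathcal{B}(1-x)$ contributes $[s=\ell]x^\ell y$, which cancels the corner term.

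\textbf{The series $\mathcal{A}$.} Your own computation pairs $\mu(s/e)$ with $\sum_{r\ge0}\binom{e+r-1}{e-1}y^{r\ell/e}=(1-y^{\ell/e})^{-e}$, and that is the right one. Your plan to ``adjust the indexing so the displayed $\mathcal{A}$ is reproduced'' cannot work. The display pairs $\mu(s/e)$ with $(1-y^{e\ell/s})^{-e}$, and the substitution $e\leftrightarrow s/e$ does not reconcile the two. Concretely, for $s=\ell=2$ the displayed $\mathcal{A}_{2,2}$ has coefficient $\tfrac12$ at $x^2y$, while $C^{(2,2)}_{2,1}=1$. The correct formula uses $y^{\ell/e}$ and $y^{r\ell/e}$. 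With that change, $e\mid\ell$ gives $\lceil 2e/\ell\rceil-1=\lfloor e/\ell\rfloor$. So the truncation removes exactly the terms with $k\le 1$, and $[s=\ell]x^\ell y$ restores $k=1$.

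\textbf{What is salvageable.} You can establish Equation~\ref{eq:genFunc} with $\mathcal{A}_{s,\ell},\mathcal{B}_{s,\ell}$ \emph{defined} as the row and column series, together with the corrected closed forms above. That is also all the paper's argument actually proves: it defines $\mathcal{A}_{s,\ell},\mathcal{B}_{s,\ell}$ as those series and never checks the displayed formulas.

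\textbf{Your Step~3 versus the paper.} For Equation~\ref{eq:ExactForm}, your Step~3 is a genuinely different and sound route. The identity $sC_s(\tau)=\sum_{e\mid s}\mu(s/e)\fix(\tau^e)$ holds for any permutation $\tau$. Also $(\sigma^{\odot k})^e=(\sigma^e)^{\odot k}$, because the action is diagonal. Theorem~\ref{theo:generalFix} applied to $\sigma^e$ then gives the formula immediately. For $e=\ell$, where $\sigma^e$ is the identity, a Vandermonde step matches your sum to $\binom{n+k-1}{k}$. The paper instead expands the rational function, extracts $[x^ny^k]$, and collapses the result using the row values from Theorem~\ref{theo:recur} and Vandermonde-type identities. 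Your route is shorter and needs neither the recursion nor the generating function. It also makes the closing consistency check with Theorem~\ref{theo:generalFix} automatic. For the same reason, it cannot ``confirm'' the displayed closed forms of $\mathcal{A}$ and $\mathcal{B}$. It only agrees with the functional equation in which they are the true boundary series.
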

\begin{proof}
    First, letting $\displaystyle \mathcal{A}_{s,\ell}(x,y)=\sum _{k\geq 1}C^{(\ell,s)}_{\ell,k}x^{\ell}y^k$ and $\displaystyle \mathcal{B}_{s,\ell}(x,y)=\sum _{n\geq \ell}C^{(\ell,s)}_{n,1}x^{n}y^1$, it is clear that the following is a valid decomposition $\mathcal{F}_{s,\ell}(x, y) = \mathcal{A}_{s,\ell}(x, y)+\mathcal{B}_{s,\ell}(x, y)+\sum _{n>\ell , k>1}C^{(\ell,s)}_{n,k}x^ny^k-C^{(\ell,s)}_{\ell,1}x^{\ell}y.$
    The last term involves $C^{(\ell,s)}_{\ell,1}$ which is one exactly when $s=\ell$ and zero otherwise. Using the recursion in Theorem \ref{theo:recur}, we get
    \begin{align*}
\mathcal{F}_{s,\ell}(x, y) &= \mathcal{A}_{s,\ell}(x, y)+\mathcal{B}_{s,\ell}(x, y)+\sum _{\substack{n>\ell\\k>1}}\left (C^{(\ell,s)}_{n-1,k}+C^{(\ell,s)}_{n,k-1}\right )x^ny^k-[s=\ell]x^{\ell}y\\
&= \mathcal{A}_{s,\ell}(x, y)+\mathcal{B}_{s,\ell}(x, y)+\sum _{\substack{n>\ell\\k>1}}C^{(\ell,s)}_{n-1,k}x^ny^k+\sum _{\substack{n>\ell\\k>1}}C^{(\ell,s)}_{n,k-1}x^ny^k-[s=\ell]x^{\ell}y\\
&= \mathcal{A}_{s,\ell}(x, y)+\mathcal{B}_{s,\ell}(x, y)+x\left (\mathcal{F}_{s,\ell}(x,y)-\mathcal{B}_{s,\ell}(x,y)\right )+y\left (\mathcal{F}_{s,\ell}(x,y)-\mathcal{A}_{s,\ell}(x,y)\right )-[s=\ell]x^{\ell}y.
\end{align*}
Solving for $\mathcal{F}_{s,\ell}(x,y)$, Equation \ref{eq:genFunc} follows. Now, consider rewriting Equation \ref{eq:genFunc} as
\begin{align*}
    \mathcal{F}_{s,\ell}(x, y) &= \frac{\mathcal{A}_{s,\ell}(x,y)(1-y)}{(1-y)-x} + \frac{\mathcal{B}_{s,\ell}(x, y)(1-x)}{(1-x)-y} - \frac{[s=\ell]x^{\ell}y}{1-x-y}\\
    &= \frac{\mathcal{A}_{s,\ell}(x,y)}{1-\frac{x}{1-y}} + \frac{\mathcal{B}_{s,\ell}(x, y)}{1-\frac{y}{1-x}} - \frac{[s=\ell]x^{\ell}y}{1-x-y}\\
    &=\mathcal{A}_{s,\ell}(x,y)\sum _{n\geq 0}\frac{x^n}{(1-y)^n}+\mathcal{B}_{s,\ell}(x,y)\sum _{k\geq 0}\frac{y^k}{(1-x)^k}- \frac{[s=\ell]x^{\ell}y}{1-x-y}\\
    &=\underbrace{\sum _{n\geq 0}x^n\mathcal{A}_{s,\ell}(x,y)\sum _{k\geq 0}\binom{k+n-1}{k}y^k}_{*_1}+\underbrace{\sum _{k\geq 0}y^k\mathcal{B}_{s,\ell}(x,y)\sum _{n\geq 0}\binom{k+n-1}{n}x^n}_{*_2}- \frac{[s=\ell]x^{\ell}y}{1-x-y}.
\end{align*}
Plugging $\mathcal{A}_{s,\ell}(x,y)$ into the summation $*_1$ yields
\begin{align*}
    *_1 &=\sum _{n\geq 0}x^n\left (\sum _{k\geq 1}C^{(\ell,s)}_{\ell,k}x^{\ell}y^k\right )\sum _{k\geq 0}\binom{k+n-1}{k}y^k\\
    &=\sum _{n\geq 0}x^nx^{\ell}\sum _{k\geq 1}y^k\sum _{\substack{k_1+k_2=k\\k_1\geq 1,k_2\geq 0}}C^{(\ell,s)}_{\ell,k_1}\binom{k_2+n-1}{k_2}\\
    &=\sum _{n\geq 0}x^{n+\ell}\sum _{k\geq 1}y^k\sum _{k_1=1}^kC^{(\ell,s)}_{\ell,k_1}\binom{k-k_1+n-1}{n-1}\\
    &=\sum _{\substack{n\geq \ell \\ k\geq 1}}x^ny^k\sum _{r=1}^kC^{(\ell,s)}_{\ell,r}\binom{k-r+n-\ell-1}{k-r}.
\end{align*}
Analogously, a similar expression for the summation $*_2$ involving $\mathcal{B}_{s,\ell}(x,y)$, gives 
{\tiny
\begin{align*}
    \mathcal{F}_{s,\ell}(x, y) &= \sum _{\substack{n\geq \ell \\ k\geq 1}}x^ny^k\left (\sum _{r=1}^kC^{(\ell,s)}_{\ell,r}\binom{k-r+n-\ell-1}{k-r}+\sum _{r=\ell}^nC^{(\ell,s)}_{r,1}\binom{k+n-r-2}{n-r}\right )-\frac{[s=\ell]x^{\ell}y}{1-x-y}\\
    &= \sum _{\substack{n\geq \ell \\ k\geq 1}}x^ny^k\left (\sum _{r=1}^kC^{(\ell,s)}_{\ell,r}\binom{k-r+n-\ell-1}{k-r}+\sum _{r=\ell}^nC^{(\ell,s)}_{r,1}\binom{k+n-r-2}{n-r}\right )-[s=\ell]x^{\ell}y\sum _{n\geq 0}(x+y)^n\\
    &= \sum _{\substack{n\geq \ell \\ k\geq 1}}x^ny^k\left (\sum _{r=1}^kC^{(\ell,s)}_{\ell,r}\binom{k-r+n-\ell-1}{k-r}+\sum _{r=\ell}^nC^{(\ell,s)}_{r,1}\binom{k+n-r-2}{n-r}\right )-[s=\ell]\sum_{n\geq 0}\sum_{k\geq 0}\binom{n}{k}y^kx^{n-k}\\
    &= \sum _{\substack{n\geq \ell \\ k\geq 1}}x^ny^k\left (\sum _{r=1}^kC^{(\ell,s)}_{\ell,r}\binom{k-r+n-\ell-1}{k-r}+\sum _{r=\ell}^nC^{(\ell,s)}_{r,1}\binom{k+n-r-2}{n-r}-[s=\ell]\binom{n-\ell+k-1}{k-1}\right ).
\end{align*}}
Taking the coefficient of $x^ny^k$, and using Theorem \ref{theo:recur} for the values of $C_s(\ell,r)$ and $C^{(\ell,s)}_{r,1}=[s=\ell]+[s=1](r-\ell)$, as well as Vandermonde type identities (see Section 5 in \cite{graham94}), we get
\begin{align*}
    C^{(\ell,s)}_{n,k}&=\sum _{r=1}^kC^{(\ell,s)}_{\ell,r}\binom{k-r+n-\ell-1}{k-r}+\sum _{r=\ell}^nC^{(\ell,s)}_{r,1}\binom{k+n-r-2}{n-r}-[s=\ell]\binom{n-\ell+k-1}{k-1}\\
    &=\sum _{r=1}^kC^{(\ell,s)}_{\ell,r}\binom{k-r+n-\ell-1}{k-r}+[s=1]\binom{k+n-\ell-1}{k}\\
    &=\frac{[s \mid \ell]}{s}\sum _{e\mid s}\mu(s/e)\sum _{r=1}^k\left [\frac{\ell}{e}\mid r \right]\binom{e+\frac{er}{\ell}-1}{e-1}\binom{k-r+n-\ell-1}{k-r}+[s=1]\binom{k+n-\ell-1}{k}\\
    &=\frac{[s \mid \ell]}{s}\sum _{e\mid s}\mu(s/e)\sum _{r=1}^{\lfloor\frac{ke}{\ell}\rfloor}\binom{e+r-1}{e-1}\binom{k-\frac{r\ell}{e}+n-\ell-1}{k-\frac{r\ell}{e}}+[s=1]\binom{k+n-\ell-1}{k},
\end{align*}
which completes the proof by noticing that when the inner sum has $r=0$, we obtain $\binom{k+n-\ell-1}{k}$ and $\sum _{e|s}{\mu (e)}=[s=1]$.
\end{proof}
As an example, when $s=1$ and $\ell >1$, using Equation \ref{eq:ExactForm}, we obtain for the sequence $f^{(\ell)}_{n,k}$, defined in Section 3, the expression  $\displaystyle f_{n,k}^{(\ell)} =  \sum _{r=0}^{\lfloor\frac{k}{\ell}\rfloor}\binom{k-{r\ell}+n-\ell-1}{k-{r\ell}}$, which agrees with the expression in Theorem \ref{theo:generalFix}.
\printbibliography

\end{document}